\newtheorem{theorem}{Theorem}[section]
\newtheorem{prop}[theorem]{Proposition}
\newtheorem{lemma}[theorem]{Lemma}
\newtheorem{coro}[theorem]{Corollary}
\newtheorem{prop-def}{Proposition-Definition}[section]
\theoremstyle{definition}
\newtheorem{defn}[theorem]{Definition}
\newtheorem{remark}[theorem]{Remark}
\newtheorem{exam}[theorem]{Example}
\newcommand{\nc}{\newcommand}
\newcommand {\emptycomment}[1]{}
\nc{\delete}[1]{{}}
\nc{\mmargin}[1]{}
\nc{\mlabel}[1]{\label{#1}}  
\nc{\mcite}[1]{\cite{#1}}  
\nc{\mref}[1]{\ref{#1}}  
\nc{\meqref}[1]{\eqref{#1}}  
\nc{\mbibitem}[1]{\bibitem{#1}} 
\newcommand{\bk}{{\mathbf{k}}}
\nc{\vep}{\varepsilon}
\nc{\bin}[2]{ (_{\stackrel{\scs{#1}}{\scs{#2}}})}  
\nc{\binc}[2]{(\!\! \begin{array}{c} \scs{#1}\\
		\scs{#2} \end{array}\!\!)}  
\nc{\bincc}[2]{  ( {\scs{#1} \atop
		\vspace{-1cm}\scs{#2}} )}  
\nc{\oline}[1]{\overline{#1}}
\nc{\mapm}[1]{\lfloor\!|{#1}|\!\rfloor}
\nc{\bs}{\bar{S}}
\nc{\cast}{{\,\mbox{\raisebox{.8pt}{$\scriptstyle \circledast$}}\,}}
\nc{\la}{\longrightarrow}
\nc{\ot}{\otimes}
\nc{\rar}{\rightarrow}
\nc{\dar}{\downarrow}
\nc{\dap}[1]{\downarrow \rlap{$\scriptstyle{#1}$}}
\nc{\defeq}{\stackrel{\rm def}{=}}
\nc{\dis}[1]{\displaystyle{#1}}
\nc{\dotcup}{\ \displaystyle{\bigcup^\bullet}\ }
\nc{\hcm}{\ \hat{,}\ }
\nc{\hts}{\hat{\otimes}}
\nc{\hcirc}{\hat{\circ}}
\nc{\lleft}{[}
\nc{\lright}{]}
\nc{\curlyl}{\left \{ \begin{array}{c} {} \\ {} \end{array}
	\right .  \!\!\!\!\!\!\!}
\nc{\curlyr}{ \!\!\!\!\!\!\!
	\left . \begin{array}{c} {} \\ {} \end{array}
	\right \} }
\nc{\longmid}{\left | \begin{array}{c} {} \\ {} \end{array}
	\right . \!\!\!\!\!\!\!}
\nc{\ora}[1]{\stackrel{#1}{\rar}}
\nc{\ola}[1]{\stackrel{#1}{\la}}
\nc{\scs}[1]{\scriptstyle{#1}} \nc{\mrm}[1]{{\rm #1}}
\nc{\dirlim}{\displaystyle{\lim_{\longrightarrow}}\,}
\nc{\invlim}{\displaystyle{\lim_{\longleftarrow}}\,}
\nc{\dislim}[1]{\displaystyle{\lim_{#1}}} \nc{\colim}{\mrm{colim}}
\nc{\mvp}{\vspace{0.3cm}} \nc{\tk}{^{(k)}} \nc{\tp}{^\prime}
\nc{\ttp}{^{\prime\prime}} \nc{\svp}{\vspace{2cm}}
\nc{\vp}{\vspace{8cm}}
\nc{\modg}[1]{\!<\!\!{#1}\!\!>}
\nc{\intg}[1]{F_C(#1)}
\nc{\lmodg}{\!<\!\!}
\nc{\rmodg}{\!\!>\!}
\nc{\cpi}{\widehat{\Pi}}
\nc{\labs}{\mid\!}
\nc{\rabs}{\!\mid}
\nc{\btr}{\blacktriangleright}
\nc{\ad}{\mrm{ad}}
\nc{\rRB}{\mathsf{rRB}}
\nc{\cocrRB}{\mathsf{cocrRB}}
\nc{\PH}{\mathsf{PH}}
\nc{\cocPH}{\mathsf{cocPH}}
\nc{\ann}{\mrm{ann}}
\nc{\Ad}{\mrm{Coad}}
\nc{\Aut}{\mrm{Aut}}
\nc{\Der}{\mrm{Der}}
\nc{\Sym}{\mrm{Sym}}
\nc{\br}{\mrm{bre}}
\nc{\can}{\mrm{can}}
\nc{\Cont}{\mrm{Cont}}
\nc{\rchar}{\mrm{char}}
\nc{\cok}{\mrm{coker}}
\nc{\de}{\mrm{dep}}
\nc{\dtf}{{R-{\rm tf}}}
\nc{\dtor}{{R-{\rm tor}}}
\nc{\Dif}{\mrm{Diff}}
\nc{\Div}{\mrm{Div}}
\nc{\End}{\mrm{End}}
\nc{\Ext}{\mrm{Ext}}
\nc{\Fil}{\mrm{Fil}}
\nc{\Fr}{\mrm{Fr}}
\nc{\Frob}{\mrm{Frob}}
\nc{\Gal}{\mrm{Gal}}
\nc{\GL}{\mrm{GL}}
\nc{\Gr}{\mrm{Gr}}
\nc{\Hom}{\mrm{Hom}}
\nc{\Hoch}{\mrm{Hoch}}
\nc{\hsr}{\mrm{H}}
\nc{\hpol}{\mrm{HP}}
\nc{\id}{\mrm{id}}
\nc{\im}{\mrm{im}}
\nc{\inv}{\mrm{inv}}
\nc{\Id}{\mrm{Id}}
\nc{\ID}{\mrm{ID}}
\nc{\Irr}{\mrm{Irr}}
\nc{\incl}{\mrm{incl}}
\nc{\length}{\mrm{length}}
\nc{\NLSW}{\mrm{NLSW}}
\nc{\Lie}{\mrm{Lie}}
\nc{\mchar}{\rm char}
\nc{\mpart}{\mrm{part}}
\nc{\ql}{{\QQ_\ell}}
\nc{\qp}{{\QQ_p}}
\nc{\rank}{\mrm{rank}}
\nc{\rcot}{\mrm{cot}}
\nc{\rdef}{\mrm{def}}
\nc{\rdiv}{{\rm div}}
\nc{\rtf}{{\rm tf}}
\nc{\rtor}{{\rm tor}}
\nc{\res}{\mrm{res}}
\nc{\SL}{\mrm{SL}}
\nc{\Spec}{\mrm{Spec}}
\nc{\tor}{\mrm{tor}}
\nc{\Tr}{\mrm{Tr}}
\nc{\tr}{\mrm{tr}}
\nc{\wt}{\mrm{wt}}
\nc{\bfk}{{\bf k}}
\nc{\bfone}{{\bf 1}}
\nc{\bfzero}{{\bf 0}}
\nc{\detail}{\marginpar{\bf More detail}
	\noindent{\bf Need more detail!}
	\svp}
\nc{\gap}{\marginpar{\bf Incomplete}\noindent{\bf Incomplete!!}
	\svp}
\nc{\FMod}{\mathbf{FMod}}
\nc{\Int}{\mathbf{Int}}
\nc{\Mon}{\mathbf{Mon}}
\nc{\remarks}{\noindent{\bf Remarks: }}
\nc{\Rep}{\mathbf{Rep}}
\nc{\Rings}{\mathbf{Rings}}
\nc{\Sets}{\mathbf{Sets}}
\nc{\Diff}{\mathbf{Diff}}
\nc{\Inte}{\mathbf{Inte}}
\nc{\U}{\mathrm{U}}
\newcommand{\YD}{{\mathcal{Y}\mathcal{D}}}
\nc{\BA}{{\mathbb A}}   \nc{\CC}{{\mathbb C}}
\nc{\DD}{{\mathbb D}}   \nc{\EE}{{\mathbb E}}
\nc{\FF}{{\mathbb F}}   \nc{\GG}{{\mathbb G}}
\nc{\HH}{{\mathbb H}}   \nc{\LL}{{\mathbb L}}
\nc{\NN}{{\mathbb N}}   \nc{\PP}{{\mathbb P}}
\nc{\QQ}{{\mathbb Q}}   \nc{\RR}{{\mathbb R}}
\nc{\TT}{{\mathbb T}}   \nc{\VV}{{\mathbb V}}
\nc{\ZZ}{{\mathbb Z}}   \nc{\TP}{\widetilde{P}}
\nc{\cala}{{\mathcal A}}    \nc{\calc}{{\mathcal C}}
\nc{\cald}{\mathcal{D}}     \nc{\cale}{{\mathcal E}}
\nc{\calf}{{\mathcal F}}    \nc{\calg}{{\mathcal G}}
\nc{\calh}{{\mathcal H}}    \nc{\cali}{{\mathcal I}}
\nc{\call}{{\mathcal L}}    \nc{\calm}{{\mathcal M}}
\nc{\caln}{{\mathcal N}}    \nc{\calo}{{\mathcal O}}
\nc{\calp}{{\mathcal P}}    \nc{\calr}{{\mathcal R}}
\nc{\cals}{{\mathcal S}}    \nc{\calt}{{\Omega}}
\nc{\calv}{{\mathcal V}}    \nc{\calw}{{\mathcal W}}
\nc{\calx}{{\mathcal X}}
\nc{\fraka}{{\mathfrak a}}
\nc{\frakb}{\mathfrak{b}}
\nc{\frakg}{{\frak g}}
\nc{\frakl}{{\frak l}}
\nc{\fraks}{{\frak s}}
\nc{\frakB}{{\frak B}}
\nc{\frakm}{{\frak m}}
\nc{\frakM}{{\frak M}}
\nc{\frakp}{{\frak p}}
\nc{\frakW}{{\frak W}}
\nc{\frakX}{{\frak X}}
\nc{\frakS}{{\frak S}}
\nc{\frakA}{{\frak A}}
\nc{\frakx}{{\frakx}}
\nc{\ynr}[1]{\textcolor{orange}{\underline{Yunnan:}#1 }}
\nc{\lir}[1]{\textcolor{red}{\underline{Li:}#1 }}
\begin{document}

\title[Matched pairs and Yang-Baxter operators]{Matched pairs and Yang-Baxter operators}

\author{Yunnan Li}
\address{School of Mathematics and Information Science, Guangzhou University,
Guangzhou 510006, China}
\email{ynli@gzhu.edu.cn}

\begin{abstract}
Recently, Ferri and Sciandra introduced two equivalent algebraic structures, matched pair of actions on an arbitrary Hopf algebra and Yetter-Drinfeld brace. In fact, they equivalently produce braiding operators on Hopf algebras satisfying the braid equation, thus generalize the construction of Yang-Baxter operators by Lu, Yan and Zhu from braiding operators on groups, and also by Angiono, Galindo and Vendramin from cocommutative Hopf braces. In this paper, we provide equivalence conditions for such kind of Yang-Baxter operators to be involutive. Particularly, we give a positive answer for an open problem raised by Ferri and Sciandra, namely, a matched pair of actions on a Hopf algebra $H$ induces an involutive Yang-Baxter operator if and only if its intrinsic Hopf algebra $H_\rightharpoonup$ in the category of Yetter-Drinfeld modules over $H$ is braided commutative.
Also, we show that the double cross product $H\bowtie H$ is a Hopf algebra with a projection and $H_\rightharpoonup$ serves as its subalgebra of coinvariants. As an illustration, we use a simplified characterization to classify matched pairs of actions on the 8-dimensional non-semisimple Hopf algebra $A_{C_2\times C_2}$ and analyze the associated Yang-Baxter operators to find that they are all involutive.
\end{abstract}

\keywords{matched pair, Yang-Baxter operator, Yetter-Drinfeld module, braided Hopf algebra
\\
\qquad 2020 Mathematics Subject Classification. 16T05, 16T25, 18M15}

\maketitle

\tableofcontents

\allowdisplaybreaks

\section{Introduction}

The Yang-Baxter equation is one of the fundamental equations in mathematical physics,
initially appearing in both quantum and statistical mechanics, and introducing the theory of quantum group. How to systematically construct and classify its solutions is an extremely challenging open problem.

Given a vector space $V$, let $R: V \otimes  V\to  V \otimes V$ be a linear automorphism satisfying the Yang-Baxter equation, namely
$$R_{12}R_{13}R_{23}=R_{23}R_{13}R_{12}.$$
Denote $\tau$ the usual flip map, i.e. $\tau(u\otimes v)=v\otimes u$. Then $r=\tau R$ is a solution of the braid equation
$$(r \otimes \id)(\id \otimes r)(r \otimes \id) = (\id \otimes r)(r \otimes \id)(\id \otimes r),$$
and called a {\bf Yang-Baxter operator}. It gives rise to a representation of the braid group $B_n$ on the tensor space $V^{\otimes n}$ for any $n$. In particular, if $r$ is involutive, i.e. $r^2=\id^{\otimes 2}$, then it becomes a representation of the symmetric group $S_n$ on $V^{\otimes n}$.

In 1992, Drinfeld proposed in \cite{Dr} the study of  set-theoretic solutions of the Yang-Baxter equation. In the same decade, Gateva-Ivanova and Van den Bergh \cite{GVan}, Etingof, Schedler and Soloviev \cite{Etingof}, as well as Lu, Yan and Zhu \cite{LYZ}, gave their seminal work independently. Specifically, Lu, Yan and Zhu \cite{LYZ} introduced a general construction method for set-theoretic solutions to the Yang-Baxter equation using braiding operators on groups. Afterwards, Takeuchi noticed in \cite{Ta} that the LYZ construction of braided group is equivalent to certain matched pair of groups with compatible actions.

So far the study of set-theoretic solutions of the Yang-Baxter equation has got explosive development. An effective and fruitful approach
is to identify and study the underlying algebraic structures. More precisely,
one can focus on e.g. the (semi)group and ring theoretical aspects that occur, and study them for specific classes of set-theoretic solutions of the Yang-Baxter equation. For example, the milestone research \cite{Ru} of Rump introduced braces as a tool to classify all non-degenerate involutive set-theoretic solutions, and one decade later Guarnieri and Vendramin in~\cite{GV} introduced the notion of skew brace to produce non-degenerate solutions in general, and pointed out that skew left brace is equivalent to LYZ's braided group. Recently, one more equivalent notion, namely post-(Lie) group, was also introduced in \cite{BGST} as the integration of post-Lie algebras.

On the other hand, matched pair of Hopf algebras was introduced by Takeuchi in \cite{Ta1}, and presented by Majid with the modern definition; see~\cite{Ma1,Ma}.
Generalizing the above framework from groups to Hopf algebras, Angiono, Galindo, and Vendramin in \cite{AGV} introduced the notion of Hopf brace, and demonstrated that cocommutative Hopf braces are equivalent to matched pairs on cocommutative Hopf algebras with compatible actions, and produce Yang-Baxter operators.
Recently, the construction of Yang-Baxter operators in \cite{AGV} from Hopf braces was reformulated in~\cite{LST} from the perspective of cocommutative post-Hopf algebras, with the universal enveloping algebra of a post-Lie algebra~\cite{MW,ELM} as a basic model.

Also, Guccione, Guccione and Vendramin in \cite{GGV} extended the ideas in \cite{AGV} to the general setting of non-degenerate solutions in symmetric tensor categories, and proposed braiding operators on cocommutative Hopf algebras. This approach was further generalized to arbitrary Hopf algebras by Guccione, Guccione and Valqui in \cite{GGV0}, where they established a more general framework, so-called weak braiding operator and Hopf $q$-brace, relating to left non-degenerate solutions of the braid equation.

As another approach to removing the cocommutativity hypothesis in \cite{AGV}, Ferri and Sciandra \cite{FS} recently introduced the notion of matched pair of actions as matched pair on a Hopf algebra with compatible actions, and innovatively discovered its equivalent structure, so-called Yetter-Drinfeld brace. Roughly speaking,
Yetter-Drinfeld brace replaces the first ordinary Hopf algebra in a Hopf brace by a Hopf algebra in the Yetter-Drinfeld module category over the second one.
Relating to Majid's transmutation theory, the authors in \cite{FS} also pointed out that coquasitriangular structures produce matched pairs of actions and provided concrete examples to illustrate this result. During our study about the work \cite{FS}, we accidentally found that  Guccione, Guccione and Valqui have recognized the equivalence between matched pairs of actions on Hopf algebras and braiding operators on Hopf algebras satisfying the braid relation, and pointed it out in their updated version~\cite{GGV1}.
In addition, Sciandra further obtained one more equivalent structure, so-called Yetter-Drinfeld post-Hopf algebra, in \cite{Sc}, generalizing the equivalence between cocommutative Hopf braces and cocommutative post-Hopf algebras.

In summary, the aforementioned algebraic structures have the following connection.
\begin{itemize}
\item
According to \cite{AGV,GGV,LST}, the following four categories are all isomorphic:
\begin{enumerate}[(1)]
\item
matched pairs of actions on cocommutative Hopf algebras
\item
braiding operators on cocommutative Hopf algebras
\item
cocommutative Hopf braces
\item
cocommutative post-Hopf algebras
\end{enumerate}

\smallskip\item
In \cite{FS,GGV1,Sc} the above isomorphisms were generalized to four larger categories:
\begin{enumerate}[(1)]
\item
matched pairs of actions on Hopf algebras
\item
braiding operators on Hopf algebras
\item
Yetter-Drinfeld braces
\item
Yetter-Drinfeld post-Hopf algebras
\end{enumerate}
\end{itemize}
In order to simplify the terminology, we only focus on matched pair of actions, braiding operator and Yetter-Drinfeld brace here, while the post-Hopf counterpart will not be discussed.

In this paper, we first revisit the equivalence between matched pairs of actions on an arbitrary Hopf algebra and braiding operators on it which satisfy the braid relation, so it is meaningful to study matched pairs of actions as a rich source for Yang-Baxter operators.
Then as the main result, we provide several equivalence conditions for such kind of braiding operators to be involutive. Particularly, we solve a problem left in~\cite{FS} by confirming that if a matched pair of actions on a Hopf algebra $H$ induces an involutive braiding operator, then its intrinsic Hopf algebra in the Yetter-Drinfeld $H$-module category is braided commutative, and vice versa. Namely, the role of braces inside the family of skew braces has an extension in the context of Yetter-Drinfeld braces.

The paper is organized as follows. In Section~\ref{sec:mp}, we recall the main notions, namely matched pair of actions on a Hopf algebra and Yetter-Drinfeld brace.
In Section~\ref{sec:yb}, we first reprove that a matched pair of actions on a Hopf algebra $H$ is equivalent to a braiding operator $r$ on $H$ satisfying the braid relation, especially a Yang-Baxter operator~(Theorem~\ref{thm:yb}) and give some concise formulas involving $r$ and the antipode of $H$~(Proposition~\ref{prop:ybo}).
Next we provide several equivalence conditions for the Yang-Baxter operator $r$ to be involutive~(Theorem~\ref{thm:involutive}), and then justify them by given the explicit formulas for the adjoint actions of the intrinsic braided Hopf algebra  $H_\rightharpoonup$ from a matched pair of actions on $H$~(Proposition~\ref{prop:adjoint}). Also, we find a Hopf algebra isomorphism between the double cross product $H\bowtie H$ and the bosonization $H_\rightharpoonup\# H$~(Theorem~\ref{thm:dcp_bos}).
In Section~\ref{sec:construct}, we use a simplified characterization of matched pairs of actions on Hopf algebras~(Theorem~\ref{thm:construct-mp}) to classify matched pairs of actions on an $8$-dimensional non-semisimple Hopf algebra $A_{C_2\times C_2}$~(Example~\ref{ex:A22}), and then analyze their associated Yang-Baxter operators (Theorem~\ref{thm:A22}) as an illustration.

\vspace{2mm}

\noindent
{\bf Convention.}
In this paper, we fix an algebraically closed ground field $\bk$ of characteristic 0.
All the objects under discussion, including vector spaces, algebras and tensor products, are taken over $\bk$ by default.

For any unital algebra $(A,m,u)$ with multiplication $m$ and the unit map $u:\bk\to A$, let $m^{(0)}=\id$ and for $n\geq1$ we write
$$m^{(n)}=(m\otimes \id^{\otimes(n-1)})\cdots (m\otimes\id)m.$$

For any coalgebra $(C,\Delta,\vep)$, we compress the Sweedler notation of the comultiplication $\Delta$ as $$\Delta(x)=x_1\otimes x_2$$ for simplicity.
Furthermore, let $\Delta^{(0)}=\id$ and for $n\geq1$ we write
$$\Delta^{(n)}(x)=(\Delta\otimes\id^{\otimes (n-1)})\cdots(\Delta\otimes\id)\Delta(x)=x_1\otimes\cdots\otimes x_{n+1}.$$

By convention, a Hopf algebra is denoted by $H=(H,\cdot\,,1,\Delta,\vep,S)$.
Denote by $G(H)$ the set of group-like elements in $H$, which is a group. Denote by $P_{g,h}(H)$ the subspace of $(g,h)$-primitive elements in $H$ for $g,h\in G(H)$.
For other basic notions of Hopf algebras, we follow the textbooks~\mcite{Mon}.

\section{Matched pairs of actions and Yetter-Drinfeld braces}\label{sec:mp}

First we recall the notion of matched pair of Hopf algebras
in Majid's book~\cite{Ma}.
\begin{defn}[{\cite[\S~7.2]{Ma}}]
A {\bf matched pair of Hopf algebras} is a quadruple $(H,K,\rightharpoonup,\leftharpoonup)$, where $H$ and $K$
are Hopf algebras, $\rightharpoonup$ is a left $H$-module coalgebra action on $K$, $\leftharpoonup$ is a right $K$-module coalgebra action on $H$ such that\vspace{-.5em}
\begin{eqnarray}
\label{eq:MP1}
x\rightharpoonup ab&=&(x_1\rightharpoonup a_1)((x_2\leftharpoonup a_2)\rightharpoonup b),\\
\label{eq:MP2}
x\rightharpoonup 1_K&=&\varepsilon_H(x)1_K,\\
\label{eq:MP3}
xy\leftharpoonup a&=&(x\leftharpoonup(y_1\rightharpoonup a_1))(y_2\leftharpoonup a_2),\\
\label{eq:MP4}
1_H\leftharpoonup a&=&\varepsilon_K(a)1_H,\\
\label{eq:MP5}
(x_1\rightharpoonup a_1) \otimes (x_2\leftharpoonup a_2)  &=&
(x_2\rightharpoonup a_2) \otimes (x_1\leftharpoonup a_1)
\end{eqnarray}
for $x,y\in H$ and $a,b\in K$.

For a matched pair of Hopf algebras $(H,K,\rightharpoonup,\leftharpoonup)$, the {\bf double cross product} $K\bowtie H$ is a Hopf algebra structure on $K\otimes H$ equipped with the product
\begin{eqnarray}\label{eq:dcp}
(a\otimes x)(b\otimes y) &\coloneqq& a(x_1\rightharpoonup b_1) \otimes (x_2\leftharpoonup b_2)y,\quad\forall x,y\in H,\ a,b\in K,
\end{eqnarray}
and the usual tensor coproduct. The antipode of $K\bowtie H$ is given by
$$S(a\otimes x)=(1\otimes S(x))(S(a)\otimes 1)=(S(x_1)\rightharpoonup S(a_1))\otimes(S(x_2)\leftharpoonup S(a_2)).$$
\end{defn}

Recently Ferri and Sciandra further considered a certain subclass of matched pairs of Hopf algebras, then introduced two equivalent notions, matched pair of actions on a Hopf algebra and Yetter-Drinfeld brace.

First we recall the notion of matched pair of actions, generalizing the braided group construction of Lu, Yan and Zhu in~\cite{LYZ}.
\begin{defn}[\cite{FS}]
A matched pair $(H,H,\rightharpoonup,\leftharpoonup)$ of Hopf algebras
is called a {\bf matched pair of actions} on $H$, when
\begin{eqnarray}
\label{eq:MP*}
xy&=&(x_1\rightharpoonup y_1)(x_2\leftharpoonup y_2),\quad\forall x,y\in H.
\end{eqnarray}
A matched pair of actions on $H$ will be abbreviated as $(H,\rightharpoonup,\leftharpoonup)$.

\end{defn}

\begin{lemma}
Let $(H,\rightharpoonup,\leftharpoonup)$ be a matched pair of actions on a Hopf algebra $H$. For any $x,y\in H$, we have
\begin{eqnarray}
\label{eq:mp-antipode-1}
S(x\rightharpoonup y) &=& (x\leftharpoonup y_1)\rightharpoonup S(y_2),\\
\label{eq:mp-antipode-2}
S(x\leftharpoonup y) &=& S(x_1) \leftharpoonup(x_2\rightharpoonup y).
\end{eqnarray}
Equivalently,
\begin{eqnarray}
\label{eq:mp-antipode-1'}
S(x_1\leftharpoonup y_1)\rightharpoonup S(x_2\rightharpoonup y_2) &=& \vep(x)S(y),\\
\label{eq:mp-antipode-2'}
S(x_1\leftharpoonup y_1)\leftharpoonup S(x_2\rightharpoonup y_2) &=& \vep(y)S(x).
\end{eqnarray}
\end{lemma}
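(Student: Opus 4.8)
The plan is to prove \eqref{eq:mp-antipode-1} and \eqref{eq:mp-antipode-2} by a convolution-inverse uniqueness argument, and then to deduce the reformulations \eqref{eq:mp-antipode-1'} and \eqref{eq:mp-antipode-2'} from them. Throughout I would work in the convolution algebra $\Hom(H\otimes H,H)$, whose unit is $u\vep$ and whose product is $(\phi*\psi)(x\otimes y)=\phi(x_1\otimes y_1)\,\psi(x_2\otimes y_2)$, freely using that $\rightharpoonup$ and $\leftharpoonup$ are coalgebra maps, so that $\Delta(x\rightharpoonup y)=(x_1\rightharpoonup y_1)\otimes(x_2\rightharpoonup y_2)$ and likewise for $\leftharpoonup$.

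For \eqref{eq:mp-antipode-1}, set $f(x\otimes y)=x\rightharpoonup y$ and $g(x\otimes y)=(x\leftharpoonup y_1)\rightharpoonup S(y_2)$. First I would note that $f$ is convolution-invertible with two-sided inverse $S\circ f$: this is immediate from the coalgebra-map property of $\rightharpoonup$ and the antipode axiom of $H$, since $(x_1\rightharpoonup y_1)S(x_2\rightharpoonup y_2)=\vep(x)\vep(y)1$ and symmetrically on the other side. Next I would show that $g$ is only a right inverse, $f*g=u\vep$; expanded, this is the identity $(x_1\rightharpoonup y_1)\big((x_2\leftharpoonup y_2)\rightharpoonup S(y_3)\big)=\vep(x)\vep(y)1$, which I obtain by evaluating $x\rightharpoonup(y_1S(y_2))$ in two ways, namely via \eqref{eq:MP1} and via the antipode together with \eqref{eq:MP2}. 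Since a two-sided inverse absorbs any one-sided inverse, $g=u\vep*g=(Sf*f)*g=Sf*(f*g)=Sf*u\vep=Sf$, which is exactly \eqref{eq:mp-antipode-1}. The proof of \eqref{eq:mp-antipode-2} is the mirror image: with $f'(x\otimes y)=x\leftharpoonup y$ and $g'(x\otimes y)=S(x_1)\leftharpoonup(x_2\rightharpoonup y)$, the analogous computation applied to $(S(x_1)x_2)\leftharpoonup a$ through \eqref{eq:MP3} and \eqref{eq:MP4} shows that $g'$ is a \emph{left} inverse of $f'$, whence $g'=Sf'$.

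To pass to \eqref{eq:mp-antipode-1'} and \eqref{eq:mp-antipode-2'}, I would substitute the already-proved formulas into one tensor leg and collapse. For \eqref{eq:mp-antipode-1'}, apply \eqref{eq:mp-antipode-1} to $S(x_2\rightharpoonup y_2)$ and then use that $\rightharpoonup$ is a module action to merge the two consecutive $\rightharpoonup$'s into $\big(S(x_1\leftharpoonup y_1)(x_2\leftharpoonup y_2)\big)\rightharpoonup S(y_3)$; the coalgebra-map property of $\leftharpoonup$ collapses the inner product $S(x_1\leftharpoonup y_1)(x_2\leftharpoonup y_2)$ to a scalar multiple of $1$ by the antipode axiom, and $1\rightharpoonup S(y_3)=S(y_3)$ recombines the surviving $y$-legs to give $\vep(x)S(y)$. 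Symmetrically, \eqref{eq:mp-antipode-2'} follows from \eqref{eq:mp-antipode-2} by peeling off $S(x_1\leftharpoonup y_1)$, merging the two $\leftharpoonup$'s through the right module axiom, and collapsing the resulting product $(x_2\rightharpoonup y_1)S(x_3\rightharpoonup y_2)$ by the antipode, after which $S(x_1)\leftharpoonup 1=S(x_1)$ recombines to $\vep(y)S(x)$. The converse implications, needed for the word ``equivalently'', are obtained by entirely analogous substitutions, so I regard \eqref{eq:mp-antipode-1}--\eqref{eq:mp-antipode-2} as the substantive content.

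The main obstacle I anticipate is bookkeeping rather than conceptual: I must keep the handedness straight, since for \eqref{eq:mp-antipode-1} the auxiliary map $g$ is only a right convolution inverse of $f$ while for \eqref{eq:mp-antipode-2} the map $g'$ is only a left inverse of $f'$, and the conclusion in each case rests on the fact that a genuine two-sided inverse ($S\circ f$, resp. $S\circ f'$) must coincide with any one-sided inverse. The only delicate point is aligning the iterated Sweedler indices of $x$ and $y$ with the antipode identities once $\rightharpoonup$ and $\leftharpoonup$ are composed; it is worth recording that the symmetry axiom \eqref{eq:MP5} is \emph{not} required for any of the four identities, each following from the module and coalgebra-map axioms together with \eqref{eq:MP1}--\eqref{eq:MP4} alone.
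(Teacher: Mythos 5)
Your proposal is correct and follows essentially the same route as the paper: both arguments work in the convolution algebra $\Hom(H\otimes H,H)$, exhibit the right-hand side of \eqref{eq:mp-antipode-1} (resp.\ \eqref{eq:mp-antipode-2}) as a one-sided convolution inverse of $(x\otimes y)\mapsto x\rightharpoonup y$ (resp.\ $x\leftharpoonup y$), identify it with the two-sided inverse $S\circ f$ by uniqueness, and then obtain \eqref{eq:mp-antipode-1'}--\eqref{eq:mp-antipode-2'} by the same substitute-and-collapse computation. The only (harmless) difference is handedness: you verify $f*g=u\vep$, which falls straight out of \eqref{eq:MP1} and \eqref{eq:MP2} applied to $x\rightharpoonup(y_1S(y_2))$, whereas the paper verifies $g*f=u\vep$ via a counit-insertion trick, so your check is in fact slightly more direct; your closing observation that \eqref{eq:MP5} is never used is also consistent with the paper's proof.
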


\begin{proof}
The result of the following convolution product implies Eq.~\eqref{eq:mp-antipode-1}.
 \begin{eqnarray*}
&&((x_1\leftharpoonup y_1)\rightharpoonup S(y_2))(x_2 \rightharpoonup y_3)\\
&=&((x_1\leftharpoonup y_1)\rightharpoonup S(y_4))((x_2\leftharpoonup y_2S(y_3)) \rightharpoonup y_5)\\
&=&((x_1\leftharpoonup y_1)\rightharpoonup S(y_4))(((x_2\leftharpoonup y_2)\leftharpoonup S(y_3)) \rightharpoonup y_5)\\
&\stackrel{\eqref{eq:MP1}}{=}&
(x\leftharpoonup y_1)\rightharpoonup S(y_2)y_3\\
&\stackrel{\eqref{eq:MP2}}{=}&
\vep(x\leftharpoonup y)1\\
&=&\vep(x)\vep(y)1,
\end{eqnarray*}
and then
\begin{eqnarray*}
&&S(x_1\leftharpoonup y_1)\rightharpoonup S(x_2\rightharpoonup y_2) \\
&\stackrel{\eqref{eq:mp-antipode-1}}{=}&
S(x_1\leftharpoonup y_1)\rightharpoonup ((x_2\leftharpoonup y_2)\rightharpoonup S(y_3))\\
&=& S(x_1\leftharpoonup y_1)(x_2\leftharpoonup y_2)\rightharpoonup S(y_3)\\
&=& \vep(x)\vep(y_1)S(y_2)\\
&=& \vep(x)S(y),
\end{eqnarray*}
so Eq.~\eqref{eq:mp-antipode-1'} holds. Eqs.~\eqref{eq:mp-antipode-2} and ~\eqref{eq:mp-antipode-2'} can be proven similarly.
\end{proof}

Next we turn to the notion of Yetter-Drinfeld brace, generalizing cocommutative Hopf brace studied in \cite{AGV}. Recall that a (left-left) {\bf Yetter-Drinfeld module} over a Hopf algebra $H$ is a left $H$-module $M$ which is also a left $H$-comodule satisfying
$$\rho(x\cdot m)=x_1m_{-1}S(x_3)\otimes (x_2\cdot m_0),\quad x\in H,\ m\in M,$$
where $\rho:M\to H\otimes M$ is the coaction map under the notation $\rho(m)=m_{-1}\otimes m_0$.
A morphism of Yetter-Drinfeld modules is a left $H$-module and also left $H$-comodule map. Denote by ${_H^H}\YD$ the braided tensor category of Yetter-Drinfeld modules over $H$.

\begin{defn}[\cite{FS}]\label{def:ydb}
A {\bf Yetter-Drinfeld brace} $(H,\cdot,\circ,1,\Delta,\vep,S,T)$ is the datum of
a Hopf algebra $H_\circ=(H,\circ,1,\Delta,\vep,T)$ and a second multiplication $\cdot$ on $H$ and a linear map $S:H\to H$ such that
\begin{enumerate}[(i)]
\item
$(H,\cdot,1,\Delta,\vep,S)$ is a Hopf algebra in the category ${_{H_\circ}^{H_\circ}}\YD$ via the action $\rightharpoonup$ and the coadjoint coaction $\Ad_L$ respectively defined by
\begin{eqnarray}
\label{eq:ydb-action}
x \rightharpoonup y &\coloneqq& S(x_1)\cdot(x_2 \circ y),\\
\label{eq:ydb-codaction}
\Ad_L(x) &\coloneqq& x_1\circ T(x_3)\otimes x_2
\end{eqnarray}
for any $x,y\in H$.
\item
Define linear map $\leftharpoonup:H\otimes H\to H$ by $x\leftharpoonup y=T(x_1 \rightharpoonup y_1)\circ x_2\circ y_2$, then the pair $(\rightharpoonup,\leftharpoonup)$ satisfies condition \eqref{eq:MP5}.
\item The following Hopf brace compatibility holds.
\begin{eqnarray}\label{eq:hbc}
x\circ(y\cdot z)&=&(x_1\circ y)\cdot S(x_2)\cdot (x_3\circ z),\quad\forall x,y,z\in H.
\end{eqnarray}
\end{enumerate}
\end{defn}

According to \cite[Corollary 3.18,\,Theorem 3.24]{FS}, we know that matched pair of actions on a Hopf algebra and Yetter-Drinfeld brace are two equivalent algebraic structures.
\begin{theorem}[\cite{FS}]
Let $(H,\cdot,1,\Delta,\vep,S)$ be a Hopf algebra with a matched pair of actions $(H,\rightharpoonup,\leftharpoonup)$ on it. Then the tuple
$$(H,\bullet_\rightharpoonup,\cdot,1,\Delta,\vep,S_\rightharpoonup,S)$$
is a Yetter-Drinfeld brace, with linear maps $\bullet_\rightharpoonup:H\otimes H\to H$ and $S_\rightharpoonup:H\to H$ respectively defined by
\begin{eqnarray}
\label{eq:mp-ydb-1}
x\bullet_\rightharpoonup y &=& x_1(S(x_2)\rightharpoonup y),\\
\label{eq:mp-ydb-2}
S_\rightharpoonup(x) &=& x_1\rightharpoonup S(x_2).
\end{eqnarray}
In particular, there is a braided Hopf algebra
$$H_\rightharpoonup=(H,\bullet_\rightharpoonup,1,\Delta,\vep,S_\rightharpoonup)$$
in the category ${_H^H}\YD$ via the action $\rightharpoonup$ and the coadjoint coaction $\Ad_L$, and with its pre-braiding $c_{H_\rightharpoonup,H_\rightharpoonup}$ given by
\begin{eqnarray}\label{eq:ydbraid}
c_{H_\rightharpoonup,H_\rightharpoonup}(x\otimes y) &=& (x_1S(x_3) \rightharpoonup y)\otimes x_2,\quad\forall x,y\in H.
\end{eqnarray}
Namely, all the structure maps of $H_\rightharpoonup$ are morphisms in ${_H^H}\YD$.

Conversely, if $(H,\cdot,\circ,1,\Delta,\vep,S,T)$ is a Yetter-Drinfeld brace, then under the notation in Definition~\ref{def:ydb},
$(H_\circ,\rightharpoonup,\leftharpoonup)$ is a matched pair of actions on $H_\circ$.
\end{theorem}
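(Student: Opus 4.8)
The statement is a biconditional, so the plan is to treat the two directions separately, the forward one (matched pair of actions $\Rightarrow$ Yetter-Drinfeld brace) carrying the bulk of the work. Throughout I would fix the identification of the Yetter-Drinfeld brace data as $\circ=\cdot$ and $T=S$ (the ambient Hopf structure of $H$), together with $\cdot_{\mathrm{YDB}}=\bullet_\rightharpoonup$ and $S_{\mathrm{YDB}}=S_\rightharpoonup$, so that $H_\circ$ in Definition~\ref{def:ydb} is literally $(H,\cdot,1,\Delta,\vep,S)$ and ${}_{H_\circ}^{H_\circ}\YD={}_H^H\YD$. The conceptual content is that $H_\rightharpoonup$ is the covariantized (transmuted) braided Hopf algebra attached to the matched pair, and the three clauses of the definition repackage this.

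For the forward direction I would first establish that $(H,\rightharpoonup,\Ad_L)$ is an object of ${}_H^H\YD$: being a left module coalgebra action, $\rightharpoonup$ makes $H$ a left $H$-module and $\Ad_L(x)=x_1S(x_3)\otimes x_2$ is the standard left coadjoint coaction, so the only nonformal point is the Yetter-Drinfeld compatibility $\rho(x\rightharpoonup m)=x_1m_{-1}S(x_3)\otimes(x_2\rightharpoonup m_0)$, which I would check by expanding both sides with \eqref{eq:MP*}, \eqref{eq:MP5} and \eqref{eq:mp-antipode-1}. Next I would show that $H_\rightharpoonup=(H,\bullet_\rightharpoonup,1,\Delta,\vep,S_\rightharpoonup)$ is a Hopf algebra in this braided category. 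Associativity and unitality of $\bullet_\rightharpoonup$ follow from \eqref{eq:MP1} and the module axiom $h\rightharpoonup(k\rightharpoonup y)=(hk)\rightharpoonup y$ (indeed $x\bullet_\rightharpoonup 1=x$ and $1\bullet_\rightharpoonup y=y$ are immediate from \eqref{eq:MP2}). The genuinely structural step is to verify that $\Delta$ and $\vep$ are braided algebra morphisms, i.e. that $\Delta(x\bullet_\rightharpoonup y)$ equals the product of $\Delta(x)$ and $\Delta(y)$ in $H_\rightharpoonup\otimes H_\rightharpoonup$ formed with respect to the braiding; here the braiding is forced to be \eqref{eq:ydbraid}, read off from $c(x\otimes y)=x_{-1}\rightharpoonup y\otimes x_0$. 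Finally $S_\rightharpoonup$ is checked to be the $\bullet_\rightharpoonup$-convolution inverse of $\id$ using \eqref{eq:mp-antipode-1}.

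With $H_\rightharpoonup$ in hand I would verify the three clauses of Definition~\ref{def:ydb}. Clause (i) is the previous paragraph, once one checks that the action $S_\rightharpoonup(x_1)\bullet_\rightharpoonup(x_2\cdot y)$ of \eqref{eq:ydb-action} reproduces the original $\rightharpoonup$; this is a short computation using the companion identity $x\cdot y=x_1\bullet_\rightharpoonup(x_2\rightharpoonup y)$ (itself immediate from the module axiom and the antipode), associativity of $\bullet_\rightharpoonup$, and the antipode property of $S_\rightharpoonup$, which collapse the expression to $1\bullet_\rightharpoonup(x\rightharpoonup y)$. Clause (ii) holds because the $\leftharpoonup$ produced by its formula, namely $S(x_1\rightharpoonup y_1)\cdot x_2\cdot y_2$, agrees with the given $\leftharpoonup$ after using \eqref{eq:MP*} and the counit property of $\rightharpoonup$, and the given $\leftharpoonup$ satisfies \eqref{eq:MP5} by hypothesis; clause (iii), the Hopf brace compatibility \eqref{eq:hbc}, I would obtain by expanding $x\cdot(y\bullet_\rightharpoonup z)$ with the definition of $\bullet_\rightharpoonup$ and repeatedly applying \eqref{eq:MP1} and \eqref{eq:MP*}.

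For the converse I would start from a Yetter-Drinfeld brace, keep $\circ$ as the Hopf multiplication of $H_\circ$, and take $\rightharpoonup,\leftharpoonup$ as in Definition~\ref{def:ydb}. The unit normalizations \eqref{eq:MP2} and \eqref{eq:MP4} are immediate, \eqref{eq:MP5} is clause (ii), and the module coalgebra conditions together with \eqref{eq:MP1} and \eqref{eq:MP3} follow from clause (i), that $(H,\cdot)$ is a Hopf algebra in ${}_{H_\circ}^{H_\circ}\YD$, combined with \eqref{eq:hbc}; the key relation \eqref{eq:MP*}, $x\circ y=(x_1\rightharpoonup y_1)(x_2\leftharpoonup y_2)$ with products read in $H_\circ$, then drops out of the definitions of $\rightharpoonup$ and $\leftharpoonup$ after cancelling $S$ against $T$. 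I expect the main obstacle to be the braided bialgebra compatibility in the forward direction: organizing the Sweedler bookkeeping for $\Delta(x\bullet_\rightharpoonup y)$ against the braiding \eqref{eq:ydbraid} is delicate precisely because the coadjoint coaction inserts an antipode between the two tensor legs, and it is \eqref{eq:MP*} together with \eqref{eq:MP1} and \eqref{eq:MP5} that realign the indices so that the two sides match.
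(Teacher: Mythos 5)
The first thing to say is that there is no proof in the paper to compare yours against: the paper states this theorem as a recalled result, attributing it to Ferri and Sciandra and citing \cite[Corollary~3.18, Theorem~3.24]{FS}; no argument is given. So your proposal can only be judged on its own terms, as a blind reconstruction of the proof in the cited source.

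As such, its strategy and all the structural identifications are correct: reading the Yetter-Drinfeld brace data as $\circ=\cdot$, $T=S$, braided product $\bullet_\rightharpoonup$, braided antipode $S_\rightharpoonup$; establishing that $(H,\rightharpoonup,\Ad_L)$ is an object of ${_H^H}\YD$ and that $H_\rightharpoonup$ is a Hopf algebra there with pre-braiding \eqref{eq:ydbraid}; the companion identity $x\cdot y=x_1\bullet_\rightharpoonup(x_2\rightharpoonup y)$, which together with associativity and the antipode axiom correctly collapses \eqref{eq:ydb-action} back to the original $\rightharpoonup$; the remark that clause (ii) reproduces the given $\leftharpoonup$ because $S(x_1\rightharpoonup y_1)(x_2\rightharpoonup y_2)=\vep(x)\vep(y)1$; and the reduction of \eqref{eq:hbc} to $H$-linearity of $m_{\bullet_\rightharpoonup}$ plus the antipode property. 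The converse outline is likewise sound. Two caveats, however. First, what you have written is a plan rather than a proof: every load-bearing verification (the Yetter-Drinfeld compatibility of $(H,\rightharpoonup,\Ad_L)$, associativity of $\bullet_\rightharpoonup$, the two-sided antipode axiom, and --- never named explicitly in your outline --- the $H$-colinearity of $m_{\bullet_\rightharpoonup}$ and $S_\rightharpoonup$ with respect to $\Ad_L$, which is part of being a Hopf algebra \emph{in} ${_H^H}\YD$) is deferred with ``I would check''. These all do go through with the equations you cite, so no step would fail, but none is carried out. Second, you misplace the difficulty: the braided bialgebra compatibility you single out as ``the main obstacle'' is actually the easiest of these checks. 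Both sides equal
\begin{equation*}
x_1\,(S(x_4)\rightharpoonup y_1)\otimes x_2\,(S(x_3)\rightharpoonup y_2),
\end{equation*}
the left side by the module-coalgebra axiom and $\Delta S=(S\otimes S)\tau\Delta$, the right side because after inserting \eqref{eq:ydbraid} the inner legs cancel via $S(x_2)x_3=\vep(x_2)1$; no use of \eqref{eq:MP*}, \eqref{eq:MP1} or \eqref{eq:MP5} is needed there. The matched-pair axioms are genuinely consumed elsewhere: \eqref{eq:MP*} and \eqref{eq:MP5} in the Yetter-Drinfeld compatibility of the object, \eqref{eq:MP1} in associativity, and the antipode identities in the colinearity and antipode checks. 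Relatedly, note that one side of the antipode axiom, $m_{\bullet_\rightharpoonup}(\id\otimes S_\rightharpoonup)\Delta=u\vep$, is immediate from your companion identity, whereas the other side is where \eqref{eq:mp-antipode-1} is really needed; since a one-sided convolution inverse of the identity does not a priori make a bialgebra a Hopf algebra, this asymmetry deserves explicit treatment in any complete write-up.
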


\begin{remark}
According to \cite[Theorem~6.34]{GGV1}, Yetter-Drinfeld brace is also equivalent to Hopf skew brace introduced by Guccione, Guccione and Valqui. For any Yetter-Drinfeld brace $(H,\cdot,\circ,1,\Delta,\vep,S,T)$, it extends a skew brace with the multiplicative group equal to $(G(H),\circ)$.
\end{remark}

\section{Matched pairs of actions and Yang-Baxter operators}\label{sec:yb}

Guccione, Guccione and Vendramin introduced the notion of braiding operator on cocommutative Hopf algebras in \cite[Definition~5.9]{GGV}, and showed that it provides  Yang-Baxter operators~\cite[Theorem~5.11]{GGV}.
This notion was further applied for arbitrary Hopf algebras in \cite{GGV1}.
\begin{defn}[\cite{GGV1}]\label{def:braiding}
Given a Hopf algebra $H$, a linear operator
$r$ of $H\otimes H$ is called a {\bf braiding operator} on $H$ if it is a coalgebra homomorphism satisfying
\begin{enumerate}[(a)]
\item\label{eq:pb-a}
$mr=m$,
\item\label{eq:pb-b}
$r(m\otimes\id)=(\id\otimes m)(r\otimes \id)(\id\otimes r)$,
\item\label{eq:pb-c}
$r(\id\otimes m)=(m\otimes\id)(\id\otimes r)(r\otimes \id)$,
\item\label{eq:pb-d}
$r(u\otimes \id)=\id\otimes u$,
\item\label{eq:pb-e}
$r(\id\otimes u)=u\otimes \id$,
\end{enumerate}
where $m$ is the multiplication of $H$ and $u$ is the unit map of $H$.
\end{defn}

Referring to \cite[Theorem~5.27,\,Theorem~6.34]{GGV1}, we know that a matched pair of actions $(H,\rightharpoonup,\leftharpoonup)$ on a Hopf algebra $H$ is equivalent to a braiding operator $r$ on $H$ satisfying the braid relation. This equivalence generalizes the original group version in \cite[Theorem~1, Theorem~2]{LYZ} and the cocommutative Hopf algebra case in~\cite[Corollary~2.4]{AGV} and \cite[Theorem~5.11]{GGV}.
Here we reorganize the result as the following theorem and do not claim any originality.
\begin{theorem}\label{thm:yb}
A matched pair of actions $(H,\rightharpoonup,\leftharpoonup)$ on a Hopf algebra $H$ is equivalent to a braiding operator $r$ on $H$, where $r$ is defined by
\begin{eqnarray}\label{eq:pyb}
r(x\otimes y) &\coloneqq& (x_1\rightharpoonup y_1)\otimes (x_2\leftharpoonup y_2),\quad\forall x,y\in H,
\end{eqnarray}
and $(H,\rightharpoonup,\leftharpoonup)$ is conversely recovered from $r$ by letting
\begin{equation}\label{eq:pyb-mpa}
  \rightharpoonup\,\coloneqq(\id\otimes\vep)r,\quad
\leftharpoonup\,\coloneqq(\vep\otimes\id)r.
\end{equation}

Moreover, the braiding operator $r$ on $H$ is a Yang-Baxter operator with its inverse given by
\begin{eqnarray}\label{eq:pyb-left-inv}
r^{-1}(x\otimes y) &=& (y_1 \leftharpoonup(S(y_2)\rightharpoonup S(x_1)))
\otimes ((S(y_3)\leftharpoonup S(x_2))\rightharpoonup x_3),\quad\forall x,y\in H.
\end{eqnarray}
\end{theorem}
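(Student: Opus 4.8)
The plan is to treat the statement in three pieces: the reconstruction formulas \eqref{eq:pyb} and \eqref{eq:pyb-mpa} that establish the equivalence, the braid relation, and the inverse formula \eqref{eq:pyb-left-inv}; invertibility together with the braid relation is what makes $r$ a Yang-Baxter operator.

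For the forward direction I would start from a matched pair of actions and verify that the $r$ of \eqref{eq:pyb} satisfies each clause of Definition~\ref{def:braiding}, matching one clause to one axiom: $mr=m$ is literally \eqref{eq:MP*}; the relation for $r(m\otimes\id)$ unwinds, after pushing the coproduct through $\leftharpoonup$, into the left-module axiom for $\rightharpoonup$ in one tensor leg and \eqref{eq:MP3} in the other, while $r(\id\otimes m)$ similarly produces \eqref{eq:MP1} and the right-module axiom for $\leftharpoonup$; the two unit clauses follow from \eqref{eq:MP2}, \eqref{eq:MP4} and the module unit axioms. The only structural point is that $r$ is a coalgebra map of the tensor coalgebra $H\otimes H$: writing out $\Delta_{H\otimes H}\,r$ and $(r\otimes r)\,\Delta_{H\otimes H}$ and using that $\rightharpoonup,\leftharpoonup$ are coalgebra actions, the two sides are seen to differ precisely by a transposition of the two middle tensor legs, so they coincide exactly by the compatibility \eqref{eq:MP5} (counit preservation being immediate).

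For the converse, given a braiding operator $r$ I would set $\rightharpoonup=(\id\otimes\vep)r$ and $\leftharpoonup=(\vep\otimes\id)r$ as in \eqref{eq:pyb-mpa}, and first recover $r$ itself: applying $(\id\otimes\vep\otimes\vep\otimes\id)$ to the coalgebra-map identity $\Delta_{H\otimes H}\,r(x\otimes y)=(r\otimes r)\,\Delta_{H\otimes H}(x\otimes y)$ and collapsing the counits returns $(x_1\rightharpoonup y_1)\otimes(x_2\leftharpoonup y_2)=r(x\otimes y)$. With $r$ reconstructed, each matched-pair axiom is obtained by feeding counits into the appropriate clause of Definition~\ref{def:braiding}: \eqref{eq:MP*} from $mr=m$, the module and unit axioms from the hexagon and unit clauses, and \eqref{eq:MP5} again from the coalgebra-map property. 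For the braid relation I would expand both composites $(r\otimes\id)(\id\otimes r)(r\otimes\id)$ and $(\id\otimes r)(r\otimes\id)(\id\otimes r)$ on $x\otimes y\otimes z$, substituting \eqref{eq:pyb} and pushing coproducts through the actions, and reconcile the two resulting expressions by repeated use of \eqref{eq:MP1}, \eqref{eq:MP3} and \eqref{eq:MP5}; this is the Sweedler-index realization of the equivalence between matched pairs of actions and braiding operators satisfying the braid relation recorded in \cite{GGV1}, for which I would not claim a new argument.

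The remaining and most delicate task is the inverse formula \eqref{eq:pyb-left-inv}. I would verify directly that this map is a two-sided inverse of $r$ by forming both composites and simplifying. The decisive tools are the antipode identities \eqref{eq:mp-antipode-1}, \eqref{eq:mp-antipode-2} and, above all, their convolution forms \eqref{eq:mp-antipode-1'}, \eqref{eq:mp-antipode-2'}: these are exactly what collapse the nested factors $S(y)\rightharpoonup S(x)$ and $S(y)\leftharpoonup S(x)$ against the outer actions into counit terms. I expect this to be the main obstacle, since the formula is doubly nested in both actions and in $S$, so the difficulty is bookkeeping — aligning the Sweedler legs and applying \eqref{eq:mp-antipode-1'}--\eqref{eq:mp-antipode-2'} in the correct order so that the telescoping actually occurs — rather than conceptual; by contrast the equivalence and the braid relation are a systematic matching of axioms.
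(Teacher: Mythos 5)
Your proposal is correct and follows essentially the same route as the paper: the equivalence is established by matching the matched-pair axioms \eqref{eq:MP1}--\eqref{eq:MP*} clause-by-clause against Definition~\ref{def:braiding} (the paper outsources this bookkeeping to Lemmas~2.4 and~2.7 of \cite{FS}), the braid relation is treated as known from \cite{GGV,GGV1}, and the inverse formula is verified by computing both composites and collapsing them with the convolution identities \eqref{eq:mp-antipode-1'} and \eqref{eq:mp-antipode-2'}, exactly the tools the paper uses. The only cosmetic differences are your explicit reconstruction of $r$ from the counit formulas via the coalgebra-map property (which the paper leaves implicit in its citations) and your plan to expand the braid relation by brute force, where the paper instead mimics the $m^{(2)}$-cancellation argument of \cite[Theorem~5.11]{GGV}.
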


\begin{proof}
For the sake of completeness, we reprove the theorem as follows.

First suppose that $(H,\rightharpoonup,\leftharpoonup)$ is a matched pair of actions and $r$ is the linear map defined by Eq.~\eqref{eq:pyb}. Condition~\eqref{eq:MP5} equivalently means that $r$ is a coalgebra homomorphism.
According to~\cite[Lemma~2.4]{FS}, conditions~\eqref{eq:MP1}--\eqref{eq:MP4} hold for $\rightharpoonup$ and $\leftharpoonup$ if and only if the braiding operator conditions \ref{eq:pb-b}--\ref{eq:pb-e} hold for $r$, while condition~\eqref{eq:MP*} is an equivalent form of \ref{eq:pb-a}. So $r$ is a coalgebra homomorphism satisfying all braiding operator conditions \ref{eq:pb-a}--\ref{eq:pb-e}.

Conversely, if $r$ is a braiding operator on $H$, then the linear maps $\rightharpoonup$ and $\leftharpoonup$ defined by \eqref{eq:pyb-mpa} are clearly coalgebra homomorphisms. By~\cite[Lemma~2.7]{FS}, they are also left and right $H$-module actions respectively. Then by the previous discussion, $\rightharpoonup$ and $\leftharpoonup$ are module coalgebra actions satisfying compatibility conditions~\eqref{eq:MP1}--\eqref{eq:MP*}, so $(H,\rightharpoonup,\leftharpoonup)$ is a matched pair of actions.

To check that the braiding operator $r$ satisfies the braid relation, one only need to mimic the proof of \cite[Theorem~5.11]{GGV} based on the equality
$$m^{(2)}(r\otimes \id)(\id\otimes r)(r\otimes \id)=m^{(2)}(\id\otimes r)(r\otimes \id)(\id\otimes r)=m^{(2)}.$$
See also the proof of \cite[Theorem~5.27]{GGV1}.

At last, we check that the braiding operator $r$ has the inverse given by Eq.~\eqref{eq:pyb-left-inv}. Let $t:H\otimes H\to H\otimes H$ be the linear operator defined by \eqref{eq:pyb-left-inv}, and we have
\begin{eqnarray*}
&&t(r(x\otimes y))\\
&\stackrel{\eqref{eq:pyb},\,\eqref{eq:MP5}}{=}&t((x_2\rightharpoonup y_2)\otimes (x_1\leftharpoonup y_1))\\
&\stackrel{\eqref{eq:pyb-left-inv}}{=}&
\big((x_1\leftharpoonup y_1)\leftharpoonup(S(x_2\leftharpoonup y_2)\rightharpoonup S(x_4\rightharpoonup y_4))\big)\\
&&\quad\otimes\,\big((S(x_3\leftharpoonup y_3)\leftharpoonup S(x_5\rightharpoonup y_5))\rightharpoonup (x_6\rightharpoonup y_6)\big)
\\
&\stackrel{\eqref{eq:MP5}}{=}&
\big((x_1\leftharpoonup y_1)\leftharpoonup(S(x_2\leftharpoonup y_2)\rightharpoonup S(x_3\rightharpoonup y_3))\big)\\
&&\quad\otimes\,\big((S(x_4\leftharpoonup y_4)\leftharpoonup S(x_5\rightharpoonup  y_5))\rightharpoonup (x_6\rightharpoonup y_6)\big)\\
&\stackrel{\eqref{eq:mp-antipode-1'},\,\eqref{eq:mp-antipode-2'}}{=}&
\big((x_1\leftharpoonup y_1)\leftharpoonup S(y_2)\big)\otimes \big(S(x_2)\rightharpoonup (x_3\rightharpoonup y_3)\big)\\
&=&\big(x_1\leftharpoonup (y_1S(y_2))\big)\otimes \big((S(x_2)x_3)\rightharpoonup y_3\big)\\
&=& x\otimes y,\\[.5em]
&&r(t(x\otimes y))\\
&\stackrel{\eqref{eq:pyb-left-inv}}{=}& r((y_1 \leftharpoonup(S(y_2)\rightharpoonup S(x_1)))
\otimes ((S(y_3)\leftharpoonup S(x_2))\rightharpoonup x_3))\\
&\stackrel{\eqref{eq:pyb},\,\eqref{eq:MP5}}{=}&
\big((y_2 \leftharpoonup(S(y_3)\rightharpoonup S(x_1)))
\rightharpoonup ((S(y_5)\leftharpoonup S(x_3))\rightharpoonup x_6)\big)\\
&&\quad\otimes\,
\big((y_1 \leftharpoonup(S(y_4)\rightharpoonup S(x_2)))
\leftharpoonup ((S(y_6)\leftharpoonup S(x_4))\rightharpoonup x_5)\big)\\
&=&
\big((y_2 \leftharpoonup(S(y_3)\rightharpoonup S(x_1)))(S(y_5)\leftharpoonup S(x_3))\rightharpoonup x_6\big)\\
&&\quad\otimes\,
\big(y_1 \leftharpoonup (S(y_4)\rightharpoonup S(x_2))((S(y_6)\leftharpoonup S(x_4))\rightharpoonup x_5)\big)\\
&\stackrel{\eqref{eq:MP5}}{=}&
\big((y_2 \leftharpoonup(S(y_3)\rightharpoonup S(x_1)))(S(y_4)\leftharpoonup S(x_2))\rightharpoonup x_6\big)\\
&&\quad\otimes\,
\big(y_1 \leftharpoonup (S(y_5)\rightharpoonup S(x_3))((S(y_6)\leftharpoonup S(x_4))\rightharpoonup x_5)\big)\\
&\stackrel{\eqref{eq:MP5}}{=}&
\big((y_2 \leftharpoonup(S(y_4)\rightharpoonup S(x_2)))(S(y_3)\leftharpoonup S(x_1))\rightharpoonup x_6\big)\\
&&\quad\otimes\,
\big(y_1 \leftharpoonup (S(y_6)\rightharpoonup S(x_4))((S(y_5)\leftharpoonup S(x_3))\rightharpoonup x_5)\big)\\
&\stackrel{\eqref{eq:MP1},\,\eqref{eq:MP3}}{=}&
\big((y_2S(y_3)\leftharpoonup S(x_1))\rightharpoonup x_4\big)\otimes
\big(y_1 \leftharpoonup (S(y_4)\rightharpoonup S(x_2) x_3)\big)\\
&\stackrel{\eqref{eq:MP2},\,\eqref{eq:MP4}}{=}&
\big(\vep(x_1)1\rightharpoonup x_2\big)\otimes
\big(y_1 \leftharpoonup \vep(y_2)1\big)\\
&=&x\otimes y,
\end{eqnarray*}
so $t$ is the inverse of $r$.
\end{proof}

\begin{remark}
In \cite{Ba} Baez introduced the notion of $r$-commutative algebra (or Yang-Baxter commutative algebra), following the idea of Manin~\cite{Man} and with application to quantum group and noncommutative geometry.
Given a Hopf algebra $H$ and a coalgebra endomorphism $r$ of $H\otimes H$, Theorem~\ref{thm:yb} actually tells us that $r$ is a braiding operator on $H$ if and only if $r$ is a Yang-Baxter operator and $H$ is an $r$-commutative algebra with respect to it.
\end{remark}

Next we generalize the formulas in \cite[Proposition 5]{LYZ} to the circumstance of Hopf algebras.
\begin{prop}\label{prop:ybo}
The Yang-Baxter operator $r$ defined by~\eqref{eq:pyb} satisfies
\begin{eqnarray*}
r(S(x_2\leftharpoonup y_2)\otimes S(x_1\rightharpoonup y_1)) &=& S(y)\otimes S(x),\\
r(S(x_1)\otimes (x_2\rightharpoonup y))&=& y_1\otimes S(x\leftharpoonup y_2),\\
r((x \leftharpoonup y_1) \otimes S(y_2))&=& S(x_1 \rightharpoonup y) \otimes x_2
\end{eqnarray*}
for any $x,y\in H$. If the antipode $S$ of $H$ is bijective,
the inverse of $r$ given by \eqref{eq:pyb-left-inv} also has the following form,
\begin{eqnarray}\label{eq:pyb-left-inv'}
r^{-1}(x\otimes y) &=& S^{-1}(S(y_1)\leftharpoonup S(x_1))\otimes S^{-1}(S(y_2)\rightharpoonup S(x_2)).
\end{eqnarray}
\end{prop}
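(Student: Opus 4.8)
The plan is to establish the three displayed identities by direct expansion of $r$ through \eqref{eq:pyb}, and then to read off \eqref{eq:pyb-left-inv'} by reinterpreting the first identity as an operator equation and inverting it. Throughout I use that $S$ reverses the coproduct, that $\rightharpoonup,\leftharpoonup$ are module actions, and the antipode formulas from the Lemma.

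I begin with the second and third identities, which are the quickest. For the second, expanding $r(S(x_1)\otimes(x_2\rightharpoonup y))$ through \eqref{eq:pyb} yields $\big(S(x_2)\rightharpoonup(x_3\rightharpoonup y_1)\big)\otimes\big(S(x_1)\leftharpoonup(x_4\rightharpoonup y_2)\big)$. The first tensorand collapses through $S(x_2)x_3=\vep(\cdot)1$ and the unit axiom to $y_1$, while the second tensorand is exactly $S(x\leftharpoonup y_2)$ by the antipode formula \eqref{eq:mp-antipode-2}. The third identity is entirely symmetric: expanding $r\big((x\leftharpoonup y_1)\otimes S(y_2)\big)$ produces $\big((x_1\leftharpoonup y_1)\rightharpoonup S(y_4)\big)\otimes\big((x_2\leftharpoonup y_2)\leftharpoonup S(y_3)\big)$, whose second factor collapses via $y_2S(y_3)=\vep(\cdot)1$ to $x_2$ and whose first factor is $S(x_1\rightharpoonup y)$ by \eqref{eq:mp-antipode-1}.

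The first identity is the main obstacle, and I expect to prove it in the same spirit as the verification of \eqref{eq:pyb-left-inv} in Theorem~\ref{thm:yb}. Expanding $r\big(S(x_2\leftharpoonup y_2)\otimes S(x_1\rightharpoonup y_1)\big)$ through \eqref{eq:pyb}, after splitting each slot once more, gives
\[
\big(S(x_4\leftharpoonup y_4)\rightharpoonup S(x_2\rightharpoonup y_2)\big)\otimes\big(S(x_3\leftharpoonup y_3)\leftharpoonup S(x_1\rightharpoonup y_1)\big).
\]
Here the difficulty is purely combinatorial: the two acting indices and the two acted indices are interleaved, so I must use \eqref{eq:MP5} repeatedly to realign the Sweedler components until each tensorand carries a consecutive pair of the form $S(\,\cdot\leftharpoonup\,\cdot\,)\rightharpoonup S(\,\cdot\rightharpoonup\,\cdot\,)$ and $S(\,\cdot\leftharpoonup\,\cdot\,)\leftharpoonup S(\,\cdot\rightharpoonup\,\cdot\,)$ respectively. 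Once the indices are aligned, the convolution-type antipode identities \eqref{eq:mp-antipode-1'} and \eqref{eq:mp-antipode-2'} collapse the first tensorand to $\vep(\cdot)S(y)$ and the second to $\vep(\cdot)S(x)$, leaving $S(y)\otimes S(x)$. This index bookkeeping, exactly as in the proof of Theorem~\ref{thm:yb}, is the only delicate point.

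Finally, for \eqref{eq:pyb-left-inv'} I observe that the first identity is precisely the operator equation $r\circ(S\otimes S)\circ\tau\circ r=(S\otimes S)\circ\tau$. Writing $F=(S\otimes S)\circ\tau$, when $S$ is bijective $F$ is invertible with $F^{-1}=\tau\circ(S^{-1}\otimes S^{-1})$, so the relation $rFr=F$ rearranges to $r^{-1}=F^{-1}\circ r\circ F$. Expanding this composite on $x\otimes y$ — applying $F$, then $r$ via \eqref{eq:pyb}, then $F^{-1}$, and using that $S^{\pm1}$ reverse the coproduct — yields exactly \eqref{eq:pyb-left-inv'}. Consistency with the expression \eqref{eq:pyb-left-inv} for $r^{-1}$ is then automatic, since both describe the same inverse.
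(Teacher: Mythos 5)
Your proposal is correct and follows essentially the same route as the paper: the second and third identities by direct expansion plus \eqref{eq:mp-antipode-2} and \eqref{eq:mp-antipode-1}, the first identity by expansion, \eqref{eq:MP5}-realignment of Sweedler indices, and collapse via \eqref{eq:mp-antipode-1'}--\eqref{eq:mp-antipode-2'} (the paper merely applies \eqref{eq:MP5} once before expanding rather than repeatedly after, which is an immaterial difference), and the inverse formula from the operator identity $r(S\otimes S)\tau r=(S\otimes S)\tau$ rearranged to $r^{-1}=(S^{-1}\otimes S^{-1})\tau\, r\,(S\otimes S)\tau$, exactly as in the paper.
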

\begin{proof}
First we check that
\begin{eqnarray*}
&&r(S(x_2\leftharpoonup y_2)\otimes S(x_1\rightharpoonup y_1))\\
&\stackrel{\eqref{eq:MP5}}{=}&r(S(x_1\leftharpoonup y_1)\otimes S(x_2\rightharpoonup y_2))\\
&=& (S(x_2\leftharpoonup y_2)\rightharpoonup S(x_4\rightharpoonup y_4))
\otimes (S(x_1\leftharpoonup y_1)\leftharpoonup S(x_3\rightharpoonup y_3))\\
&\stackrel{\eqref{eq:MP5}}{=}&
(S(x_3\leftharpoonup y_3)\rightharpoonup S(x_4\rightharpoonup y_4))
\otimes (S(x_1\leftharpoonup y_1)\leftharpoonup S(x_2\rightharpoonup y_2))\\
&\stackrel{\eqref{eq:mp-antipode-1'},\,\eqref{eq:mp-antipode-2'}}{=}&
\vep(x_2)S(y_2)\otimes \vep(y_1) S(x_1)\\
&=& S(y)\otimes S(x).
\end{eqnarray*}
Namely, $r(S\otimes S)\tau r=(S\otimes S)\tau$, where $\tau$ is the flip map. Also,
\begin{eqnarray*}
r(S(x_1)\otimes (x_2\rightharpoonup y))&=& (S(x_2)\rightharpoonup (x_3\rightharpoonup y_1))\otimes (S(x_1)\leftharpoonup (x_4\rightharpoonup y_2))\\
&=& (S(x_2)x_3\rightharpoonup y_1)\otimes (S(x_1)\leftharpoonup (x_4\rightharpoonup y_2))\\
&=& y_1\otimes (S(x_1)\leftharpoonup (x_2\rightharpoonup y_2))\\
&\stackrel{\eqref{eq:mp-antipode-2}}{=}& y_1\otimes S(x\leftharpoonup y_2),\\[.5em]
r((x \leftharpoonup y_1) \otimes S(y_2))&=& ((x_1 \leftharpoonup y_1)\rightharpoonup S(y_4)) \otimes ((x_2 \leftharpoonup y_2) \leftharpoonup S(y_3))\\
&=& ((x_1 \leftharpoonup y_1)\rightharpoonup S(y_4)) \otimes (x_2 \leftharpoonup y_2S(y_3))\\
&=& ((x_1 \leftharpoonup y_1)\rightharpoonup S(y_2)) \otimes x_2\\
&\stackrel{\eqref{eq:mp-antipode-1}}{=}& S(x_1 \rightharpoonup y) \otimes x_2.
\end{eqnarray*}
In particular, if $S$ is bijective, the inverse
$r^{-1}=(S^{-1}\otimes S^{-1})\tau r (S\otimes S)\tau$ is as stated in \eqref{eq:pyb-left-inv'}.
\end{proof}

Now we are in the position to give a positive answer for the problem raised by Ferri and Sciandra~\cite[Problem~5.9]{FS}. It generalizes the case of braided groups in \cite[Proposition~ 4]{LYZ} and that of cocommutative Hopf braces in \cite[Corollary~2.5]{AGV}.
\begin{theorem}\label{thm:involutive}
Let $(H,\rightharpoonup,\leftharpoonup)$ be a matched pair of actions on a Hopf algebra $H$. The following conditions are equivalent:
\begin{enumerate}[(i)]
\item\label{in1}
The Yang-Baxter operator $r$ defined by~\eqref{eq:pyb} is involutive.
\item\label{in2}
For any $x,y\in H$,
\begin{eqnarray}
\label{eq:in2}
(x_1\rightharpoonup y_1)\rightharpoonup (x_2\leftharpoonup y_2)=\vep(y)x,\quad
(x_1\rightharpoonup y_1)\leftharpoonup (x_2\leftharpoonup y_2)=\vep(x)y.
\end{eqnarray}
\item\label{in3}
For any $x,y\in H$,
\begin{eqnarray}
\label{eq:in3}
x\leftharpoonup y &=& S(x_1\rightharpoonup y)\rightharpoonup x_2.
\end{eqnarray}
\item\label{in4}
The multiplication
$m_{\bullet_\rightharpoonup}$ of $H_\rightharpoonup$ defined by \eqref{eq:mp-ydb-1} is braided commutative in ${_H^H}\YD$, namely,
$$m_{\bullet_\rightharpoonup}  c_{H_\rightharpoonup,H_\rightharpoonup}=m_{\bullet_\rightharpoonup}.$$
\end{enumerate}

\end{theorem}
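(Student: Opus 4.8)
The plan is to prove the equivalences in two stages: first establish $\eqref{in1}\Leftrightarrow\eqref{in2}\Leftrightarrow\eqref{in3}$ among the ``matched pair'' reformulations, and then link these to the intrinsic braided commutativity $\eqref{in4}$. The device that makes the first stage short is the observation, immediate from \eqref{eq:pyb-mpa}, that the two ``legs'' of $r$ recover the actions: $(\id\otimes\vep)r=\,\rightharpoonup$ and $(\vep\otimes\id)r=\,\leftharpoonup$ as maps $H\otimes H\to H$. I will also use the elementary fact that a coalgebra endomorphism $f$ of $H\otimes H$ is determined by these legs, since one checks directly that $f=\big((\id\otimes\vep)f\otimes(\vep\otimes\id)f\big)\Delta_{H\otimes H}$ whenever $f$ is a coalgebra map.

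For $\eqref{in1}\Leftrightarrow\eqref{in2}$, recall that $r$ is a coalgebra homomorphism by Theorem~\ref{thm:yb}, hence so are $r^2$ and $\id_{H\otimes H}$. Composing with the leg identity gives $(\id\otimes\vep)r^2=\,\rightharpoonup\circ\,r$ and $(\vep\otimes\id)r^2=\,\leftharpoonup\circ\,r$, which evaluate on $x\otimes y$ to $(x_1\rightharpoonup y_1)\rightharpoonup(x_2\leftharpoonup y_2)$ and $(x_1\rightharpoonup y_1)\leftharpoonup(x_2\leftharpoonup y_2)$ respectively; the corresponding legs of $\id_{H\otimes H}$ are $\vep(y)x$ and $\vep(x)y$. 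Thus the two equations in \eqref{eq:in2} say exactly that $r^2$ and $\id_{H\otimes H}$ have equal legs, and by the determination fact this is equivalent to $r^2=\id_{H\otimes H}$.

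For $\eqref{in1}\Rightarrow\eqref{in3}$ I would use the third identity of Proposition~\ref{prop:ybo}, namely $r((x\leftharpoonup y_1)\otimes S(y_2))=S(x_1\rightharpoonup y)\otimes x_2$. Applying the involution $r$ to both sides and then $\id\otimes\vep$ collapses the left side to $x\leftharpoonup y$ and, using $(\id\otimes\vep)r=\,\rightharpoonup$, the right side to $S(x_1\rightharpoonup y)\rightharpoonup x_2$, which is precisely \eqref{eq:in3}. For the converse $\eqref{in3}\Rightarrow\eqref{in2}$, I would substitute \eqref{eq:in3} into $(x_1\rightharpoonup y_1)\rightharpoonup(x_2\leftharpoonup y_2)$; the left module axiom $a\rightharpoonup(b\rightharpoonup c)=(ab)\rightharpoonup c$ together with $z_1S(z_2)=\vep(z)1$ applied to $z=x\rightharpoonup y$ immediately yields the first equation of \eqref{eq:in2}. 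The second equation follows by the same substitution into the outer $\leftharpoonup$, now using \eqref{eq:MP5} to synchronize the Sweedler indices so that the already-proven first equation applies, and finishing with $S(x_1)x_2=\vep(x)1$. This closes $\eqref{in1}\Leftrightarrow\eqref{in2}\Leftrightarrow\eqref{in3}$.

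The main obstacle is $\eqref{in4}$. Here I would expand $m_{\bullet_\rightharpoonup}c_{H_\rightharpoonup,H_\rightharpoonup}$ using \eqref{eq:mp-ydb-1} for $\bullet_\rightharpoonup$ and \eqref{eq:ydbraid} for the pre-braiding, obtaining
$$m_{\bullet_\rightharpoonup}c_{H_\rightharpoonup,H_\rightharpoonup}(x\otimes y)=\big((x_1S(x_5))\rightharpoonup y_1\big)\big(S((x_2S(x_4))\rightharpoonup y_2)\rightharpoonup x_3\big),$$
and compare it with $m_{\bullet_\rightharpoonup}(x\otimes y)=x_1(S(x_2)\rightharpoonup y)$. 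The identity $m_{\bullet_\rightharpoonup}c_{H_\rightharpoonup,H_\rightharpoonup}=m_{\bullet_\rightharpoonup}$ should then be reduced to \eqref{eq:in3} by repeated use of the matched pair axioms \eqref{eq:MP1}--\eqref{eq:MP5} and the antipode identities \eqref{eq:mp-antipode-1}--\eqref{eq:mp-antipode-2'}. The real work lies in the heavy Sweedler bookkeeping needed to match the five-fold coproduct of $x$, and it is convenient to organize this through the explicit adjoint-action formulas recorded in Proposition~\ref{prop:adjoint}. This yields $\eqref{in3}\Leftrightarrow\eqref{in4}$ and completes the proof.
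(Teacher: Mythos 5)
Your first stage is correct, and in places cleaner than the paper's own argument. The leg-determination lemma --- that any coalgebra endomorphism $f$ of $H\otimes H$ satisfies $f=\big((\id\otimes\vep)f\otimes(\vep\otimes\id)f\big)\Delta_{H\otimes H}$, hence is determined by its two legs --- is valid, and since $r^2$ and $\id_{H\otimes H}$ are both coalgebra maps it gives (i)$\Leftrightarrow$(ii) without computing $r^2$ explicitly (the paper instead expands $r^2$ and re-indexes with \eqref{eq:MP5}). Your derivation of \eqref{eq:in3} from involutivity via the third identity of Proposition~\ref{prop:ybo} is also correct and genuinely different: the paper proves (ii)$\Rightarrow$(iii) by writing $x\leftharpoonup y=S(x_1\rightharpoonup y_1)(x_2\rightharpoonup y_2)\rightharpoonup(x_3\leftharpoonup y_3)$ and applying \eqref{eq:in2}, whereas you exploit $r^2=\id^{\otimes 2}$ directly. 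Your (iii)$\Rightarrow$(ii) matches the paper's computation, so the cycle (i)$\Leftrightarrow$(ii), (i)$\Rightarrow$(iii)$\Rightarrow$(ii) does close the first stage.

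The gap is in stage two, specifically the direction (iv)$\Rightarrow$(iii). Your plan --- expand $m_{\bullet_\rightharpoonup}c_{H_\rightharpoonup,H_\rightharpoonup}$ at a generic $x\otimes y$ (your displayed formula is correct) and ``reduce'' the resulting identity to \eqref{eq:in3} by Sweedler bookkeeping --- does work in the direction (iii)$\Rightarrow$(iv): one application of \eqref{eq:in3} to the factor $S\big(x_2\rightharpoonup(S(x_4)\rightharpoonup y_2)\big)\rightharpoonup x_3$ followed by \eqref{eq:MP*} collapses everything to $x_1(S(x_2)\rightharpoonup y)$. But in the converse direction no amount of rewriting of the generic identity will isolate \eqref{eq:in3}, because \eqref{eq:in3} sits inside a product behind the factor $x_1\rightharpoonup y_1$; one must choose special arguments and then cancel that factor. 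The paper does exactly this: it evaluates $m_{\bullet_\rightharpoonup}c_{H_\rightharpoonup,H_\rightharpoonup}=m_{\bullet_\rightharpoonup}$ on the element $x_1\otimes(x_2\rightharpoonup y)$, where both sides collapse (the $S(x_i)x_{i+1}$-type factors cancel) to
$$(x_1\rightharpoonup y_1)(x_2\leftharpoonup y_2)=(x_1\rightharpoonup y_1)\big(S(x_2\rightharpoonup y_2)\rightharpoonup x_3\big),$$
and then cancels the common left factor by convolving with $S(x_1\rightharpoonup y_1)$ --- legitimate because $\Delta(x\rightharpoonup y)=(x_1\rightharpoonup y_1)\otimes(x_2\rightharpoonup y_2)$, so $S(x_1\rightharpoonup y_1)(x_2\rightharpoonup y_2)=\vep(x)\vep(y)1$ --- leaving exactly \eqref{eq:in3}. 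This substitution-plus-convolution-cancellation is the missing idea in your proposal. Moreover, Proposition~\ref{prop:adjoint} is not a substitute for it: in the paper those adjoint-action formulas appear after the theorem, and their role is to show a posteriori that \eqref{eq:in2}--\eqref{eq:in3} force both adjoint actions of $H_\rightharpoonup$ to be trivial, not to extract \eqref{eq:in3} from braided commutativity.
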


\begin{proof}
First we show that \ref{in1} and \ref{in2} are equivalent. Note that
\begin{eqnarray*}
r^2(x\otimes y)
&=&((x_1\rightharpoonup y_1)\rightharpoonup (x_3\leftharpoonup y_3))\otimes
((x_2\rightharpoonup y_2)\leftharpoonup (x_4\leftharpoonup y_4))\\
&\stackrel{\eqref{eq:MP5}}{=}& ((x_1\rightharpoonup y_1)\rightharpoonup (x_2\leftharpoonup y_2))\otimes
((x_3\rightharpoonup y_3)\leftharpoonup (x_4\leftharpoonup y_4)),
\end{eqnarray*}
and we have
\begin{eqnarray*}
(\id\otimes\vep)r^2(x\otimes y)&=&(x_1\rightharpoonup y_1)\rightharpoonup (x_2\leftharpoonup y_2),\\
(\vep\otimes\id)r^2(x\otimes y)&=&(x_1\rightharpoonup y_1)\leftharpoonup (x_2\leftharpoonup y_2).
\end{eqnarray*}
So $r^2=\id^{\otimes2}$ equivalently implies that Eq.~\eqref{eq:in2}.

Next we show that \ref{in2} and \ref{in3} are equivalent. If \ref{in2} is true, then
\begin{eqnarray*}
x\leftharpoonup y &=& S(x_1\rightharpoonup y_1)(x_2\rightharpoonup y_2) \rightharpoonup (x_3\leftharpoonup y_3)\\
&=&
S(x_1\rightharpoonup y_1)\rightharpoonup((x_2\rightharpoonup y_2) \rightharpoonup (x_3\leftharpoonup y_3))\\
&\stackrel{\eqref{eq:in2}}{=}&
S(x_1\rightharpoonup y_1)\rightharpoonup \vep(y_2)x_2\\
&=&
S(x_1\rightharpoonup y)\rightharpoonup x_2.
\end{eqnarray*}
Conversely, if \ref{in3} is true, then
\begin{eqnarray*}
(x_1\rightharpoonup y_1)\rightharpoonup (x_2\leftharpoonup y_2)
&\stackrel{\eqref{eq:in3}}{=}& (x_1\rightharpoonup y_1)\rightharpoonup (S(x_2\rightharpoonup y_2)\rightharpoonup x_3)\\
&=& (x_1\rightharpoonup y_1)S(x_2\rightharpoonup y_2)\rightharpoonup x\\
&=& \vep(y)x,\\[.3em]
(x_1\rightharpoonup y_1)\leftharpoonup (x_2\leftharpoonup y_2)
&\stackrel{\eqref{eq:in3}}{=}& S((x_1\rightharpoonup y_1)\rightharpoonup (x_3\leftharpoonup y_3))\rightharpoonup (x_2\rightharpoonup y_2)\\
&\stackrel{\eqref{eq:MP5}}{=}& S((x_1\rightharpoonup y_1)\rightharpoonup (x_2\leftharpoonup y_2))\rightharpoonup (x_3\rightharpoonup y_3)\\
&=& S(\vep(y_1)x_1)\rightharpoonup (x_2\rightharpoonup y_2)\\
&=& S(x_1)x_2\rightharpoonup y\\
&=& \vep(x)y.
\end{eqnarray*}

At last, we show that \ref{in3} and \ref{in4} are equivalent. If \ref{in3} is true, then
\begin{eqnarray*}
m_{\bullet_\rightharpoonup}(c_{H_\rightharpoonup,H_\rightharpoonup}(x\otimes y))
&\stackrel{\eqref{eq:ydbraid}}{=}&
(x_1S(x_3)\rightharpoonup y)\bullet_\rightharpoonup x_2,\\
&\stackrel{\eqref{eq:mp-ydb-1}}{=}& (x_1S(x_5)\rightharpoonup y_1)(S(x_2S(x_4)\rightharpoonup y_2)\rightharpoonup x_3)\\
&=& (x_1\rightharpoonup (S(x_5)\rightharpoonup y_1))(S(x_2\rightharpoonup (S(x_4)\rightharpoonup y_2))\rightharpoonup x_3)\\
&\stackrel{\eqref{eq:in3}}{=}& (x_1\rightharpoonup (S(x_4)\rightharpoonup y_1))(x_2\leftharpoonup (S(x_3)\rightharpoonup y_2))\\
&\stackrel{\eqref{eq:MP*}}{=}& x_1(S(x_2)\rightharpoonup y)\\
&\stackrel{\eqref{eq:mp-ydb-1}}{=}& x\bullet_\rightharpoonup y,
\end{eqnarray*}
so $m_{\bullet_\rightharpoonup} c_{H_\rightharpoonup,H_\rightharpoonup}=m_{\bullet_\rightharpoonup}$.
Conversely, we have
\begin{eqnarray*}
m_{\bullet_\rightharpoonup}(x_1\otimes(x_2\rightharpoonup y)) &\stackrel{\eqref{eq:mp-ydb-1}}{=}& x_1(S(x_2)\rightharpoonup(x_3\rightharpoonup y))\\
&=& x_1(S(x_2)x_3\rightharpoonup y)\\
&=& xy\\
&\stackrel{\eqref{eq:MP*}}{=}&
(x_1\rightharpoonup y_1)(x_2\leftharpoonup y_2),\\[.3em]
m_{\bullet_\rightharpoonup}(c_{H_\rightharpoonup,H_\rightharpoonup}(x_1\otimes(x_2\rightharpoonup y))
&\stackrel{\eqref{eq:ydbraid}}{=}& (x_1S(x_3)\rightharpoonup (x_4\rightharpoonup y))\bullet_\rightharpoonup x_2,\\
&=& (x_1S(x_3)x_4\rightharpoonup y)\bullet_\rightharpoonup x_2,\\
&=& (x_1\rightharpoonup y)\bullet_\rightharpoonup x_2\\
&\stackrel{\eqref{eq:mp-ydb-1}}{=}& (x_1\rightharpoonup y_1)(S(x_2\rightharpoonup y_2) \rightharpoonup x_3).
\end{eqnarray*}
When $m_{\bullet_\rightharpoonup}  c_{H_\rightharpoonup,H_\rightharpoonup}=m_{\bullet_\rightharpoonup}$, we cancel factors via convolution product to obtain Eq.~\eqref{eq:in3}.
\end{proof}

\begin{remark}
When $H$ is non-cocommutative, one can not use the right adjoint action of $H_\rightharpoonup$ to define a Yang-Baxter operator equivalent to $r$ as in the cocommutative case~(\cite[Theorem~2.3]{AGV}). It urges us to solve the problem in~\cite[Problem~5.9]{FS} directly by Theorem~\ref{thm:involutive}.
\end{remark}

Applying Theorem~\ref{thm:involutive}, we generalize the formulas given in \cite[Proposition~2.4]{Ta} as follows.
\begin{coro}
Let $(H,\rightharpoonup,\leftharpoonup)$ be a matched pair of actions on a Hopf algebra $H$. If the Yang-Baxter operator $r$ associated to $(H,\rightharpoonup,\leftharpoonup)$ is involutive, then for any $x,y\in H$,
\begin{eqnarray*}
S(x\leftharpoonup y)&=&S(y)\rightharpoonup S(x),\\
S(x\rightharpoonup y)&=&S(y)\leftharpoonup S(x).
\end{eqnarray*}
\end{coro}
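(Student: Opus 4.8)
The plan is to deduce both identities from the single operator relation
$$r(S\otimes S)\tau r=(S\otimes S)\tau$$
already established in Proposition~\ref{prop:ybo} (it is exactly the content of its first displayed formula), where $\tau$ is the flip map. The only extra input needed is the hypothesis that $r$ is involutive: since $r^2=\id^{\otimes 2}$ forces $r^{-1}=r$, composing the above identity on the right with $r$ converts it into the more symmetric relation
$$(S\otimes S)\tau r=r(S\otimes S)\tau .$$
Both stated formulas will then be read off from this single tensor identity by projecting onto the two legs.

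To make this explicit I would expand each side on a generic $x\otimes y$. Using the definition~\eqref{eq:pyb} of $r$, the left-hand side is
$$(S\otimes S)\tau r(x\otimes y)=S(x_2\leftharpoonup y_2)\otimes S(x_1\rightharpoonup y_1).$$
For the right-hand side I would first apply $\tau$ and $S\otimes S$, then use that $S$ is an anti-coalgebra map, i.e. $\Delta S=(S\otimes S)\tau\Delta$, to rewrite $\Delta(S(y))=S(y_2)\otimes S(y_1)$ and $\Delta(S(x))=S(x_2)\otimes S(x_1)$, and finally apply $r$; this gives
$$r(S\otimes S)\tau(x\otimes y)=r(S(y)\otimes S(x))=(S(y_2)\rightharpoonup S(x_2))\otimes(S(y_1)\leftharpoonup S(x_1)).$$
Equating the two expressions produces one identity in $H\otimes H$.

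Finally I would separate this tensor identity into its two scalar components by applying $\id\otimes\vep$ and $\vep\otimes\id$. The crucial point is that $\rightharpoonup$ and $\leftharpoonup$ are module coalgebra actions, so $\vep(a\rightharpoonup b)=\vep(a)\vep(b)$ and $\vep(a\leftharpoonup b)=\vep(a)\vep(b)$; this lets the counit collapse each action and the surviving Sweedler sums reassemble to $x$, $y$, $S(x)$, $S(y)$. Applying $\id\otimes\vep$ then yields $S(x\leftharpoonup y)=S(y)\rightharpoonup S(x)$, and applying $\vep\otimes\id$ yields $S(x\rightharpoonup y)=S(y)\leftharpoonup S(x)$, as claimed.

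I expect no structural obstacle here: the argument is purely formal once the intertwining identity of Proposition~\ref{prop:ybo} and involutivity are combined. The only place demanding care is the Sweedler bookkeeping, in particular the reversal of the comultiplication indices caused by the anti-coalgebra property of $S$, and the check that the counit really does factor through each action so that the two legs decouple cleanly.
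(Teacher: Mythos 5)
Your proposal is correct, but it takes a genuinely different route from the paper's proof. The paper argues at the level of elements: it starts from the antipode identity $S(x\leftharpoonup y)=S(x_1)\leftharpoonup(x_2\rightharpoonup y)$ of Eq.~\eqref{eq:mp-antipode-2}, substitutes the involutivity characterization $x\leftharpoonup y=S(x_1\rightharpoonup y)\rightharpoonup x_2$ of Eq.~\eqref{eq:in3} (condition (iii) of Theorem~\ref{thm:involutive}), and collapses the resulting Sweedler expression; the second formula then follows by symmetry. You instead work at the operator level: the first display of Proposition~\ref{prop:ybo} is exactly $r(S\otimes S)\tau r=(S\otimes S)\tau$, involutivity upgrades this to the commutation relation $r(S\otimes S)\tau=(S\otimes S)\tau r$, and the two claimed formulas drop out by applying $\id\otimes\vep$ and $\vep\otimes\id$, which by Eq.~\eqref{eq:pyb-mpa} is precisely how $\rightharpoonup$ and $\leftharpoonup$ are recovered from $r$. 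All steps check out: the expansion of $r(S(y)\otimes S(x))$ via the anti-coalgebra property of $S$ is right, and the counit projections decouple cleanly because both actions are coalgebra maps and $\vep S=\vep$. What your argument buys is conceptual transparency: it isolates a clean structural statement (involutivity is exactly what makes $r$ commute with $(S\otimes S)\tau$, given the twisted intertwining of Proposition~\ref{prop:ybo}) and avoids invoking the equivalence (i)$\Leftrightarrow$(iii) of Theorem~\ref{thm:involutive}; what the paper's route buys is brevity and the reuse of Eq.~\eqref{eq:in3}, which it has already established as the workhorse characterization of involutivity.
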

\begin{proof}
When $r$ is involutive, we  actually have
\begin{eqnarray*}
S(x\leftharpoonup y)&\stackrel{\eqref{eq:mp-antipode-2}}{=}& S(x_1) \leftharpoonup(x_2\rightharpoonup y)\\
&\stackrel{\eqref{eq:in3}}{=}& S(S(x_2) \rightharpoonup(x_3\rightharpoonup y))\rightharpoonup S(x_1)\\
&=& S(S(x_2)x_3\rightharpoonup y)\rightharpoonup S(x_1)\\
&=& S(y)\rightharpoonup S(x).
\end{eqnarray*}
By symmetry, we also have $S(x\rightharpoonup y)=S(y)\leftharpoonup S(x)$.
\end{proof}

Next we compute
the two adjoint actions of the braided Hopf algebra $H_\rightharpoonup$ in the category ${_H^H}\YD$ to justify Theorem~\ref{thm:involutive}.
\begin{prop}\label{prop:adjoint}
Let $(H,\rightharpoonup,\leftharpoonup)$ be a matched pair of actions on a Hopf algebra $H$. The following formulas hold for the left adjoint action and the right adjoint action of $H_\rightharpoonup$.
\begin{eqnarray*}
\ad_{L,x}(y) &=& x_1(S(x_4)\rightharpoonup y_1)(S(S(x_3)\rightharpoonup y_2)\rightharpoonup S(x_2)),\\
\ad_{R,y}(x) &=& (x_1\rightharpoonup (S(x_4) \rightharpoonup y_1))\rightharpoonup(x_2\leftharpoonup (S(x_3)\rightharpoonup y_2))
\end{eqnarray*}
for any $x,y\in H$.
\end{prop}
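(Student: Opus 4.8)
The plan is to unwind the categorical definitions of the two adjoint actions of the braided Hopf algebra $H_\rightharpoonup$ in ${_H^H}\YD$ and then reduce the resulting Sweedler expressions by the matched pair axioms. For a Hopf algebra $B$ in a braided category with product $m_B$, antipode $S_B$ and braiding $c$, the left adjoint action is the composite
$$\ad_L = m_B^{(2)}(\id\otimes\id\otimes S_B)(\id\otimes c)(\Delta\otimes\id),$$
while the right adjoint action is the analogous composite with the braiding inserted on the first two tensor legs. For $B=H_\rightharpoonup$ I would substitute the product $\bullet_\rightharpoonup$ from \eqref{eq:mp-ydb-1}, the antipode $S_\rightharpoonup$ from \eqref{eq:mp-ydb-2}, and the pre-braiding $c_{H_\rightharpoonup,H_\rightharpoonup}$ from \eqref{eq:ydbraid}, which are the structure maps making $H_\rightharpoonup$ a braided Hopf algebra.

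For $\ad_{L,x}(y)$, applying $\Delta\otimes\id$, then the braiding to the second and third legs (which injects the coadjoint coaction $x\mapsto x_1S(x_3)$ into the action $\rightharpoonup$), then $S_\rightharpoonup$ to the last leg, and finally the iterated product $m_{\bullet_\rightharpoonup}^{(2)}$, produces a long expression in the components of $x$ and $y$. I would then replace every $\bullet_\rightharpoonup$ by $a\bullet_\rightharpoonup b=a_1(S(a_2)\rightharpoonup b)$, collapse the internal products of the form $S(x_i)x_{i+1}$ via the antipode axiom, merge nested actions through \eqref{eq:MP1}, and realign the legs through \eqref{eq:MP5}, until only the components $x_1,\dots,x_4$ and $y_1,y_2$ survive and the expression coincides with the stated formula. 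The computation of $\ad_{R,y}(x)$ is parallel; here I expect to use \eqref{eq:MP*} to recombine factors and the antipode identities \eqref{eq:mp-antipode-1} and \eqref{eq:mp-antipode-2} to absorb the occurrences of $S_\rightharpoonup$, arriving at the second formula.

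The difficulty is essentially organizational: one must manage a large number of Sweedler components and apply the compatibility conditions in the correct order. The genuinely delicate point is the interplay between the coadjoint coaction $x_1S(x_3)$ built into the braiding \eqref{eq:ydbraid} and the antipodes coming from $S_\rightharpoonup$ and from $\bullet_\rightharpoonup$: since $H$ is not assumed cocommutative, one cannot freely permute tensor legs, so \eqref{eq:MP5} must be invoked to align indices before any antipode cancellation can be carried out.
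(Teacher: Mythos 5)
Your setup is exactly the paper's: both adjoint actions are written as the same categorical composites, $\ad_{L,x}(y)=m_{\bullet_\rightharpoonup}^{(2)}(\id^{\otimes 2}\otimes S_\rightharpoonup)(\id\otimes c_{H_\rightharpoonup,H_\rightharpoonup})(\Delta\otimes\id)$ and its mirror for $\ad_R$, with the braiding \eqref{eq:ydbraid} feeding the coadjoint coaction $x_1S(x_3)$ into $\rightharpoonup$, after which one expands \eqref{eq:mp-ydb-1}, \eqref{eq:mp-ydb-2} and cancels antipodes---precisely the paper's computation. The differences are only tactical (the paper's right-adjoint reduction absorbs $S_\rightharpoonup$ via the fact that $S_\rightharpoonup$ and $\bullet_\rightharpoonup$ are morphisms in ${_H^H}\YD$ together with the antipode axiom in $H_\rightharpoonup$, whereas your route through \eqref{eq:mp-antipode-1}, \eqref{eq:mp-antipode-2} and \eqref{eq:MP*} yields the same cancellations, e.g.\ $S_\rightharpoonup(a\rightharpoonup b)=(ab_1)\rightharpoonup S(b_2)$), so this is essentially the paper's proof.
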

\begin{proof}
By definition, we compute the left adjoint action of $H_\rightharpoonup$ as follows.
\begin{eqnarray*}
\ad_{L,x}(y) &=& m_{\bullet_\rightharpoonup}^{(2)}(\id^{\otimes 2}\otimes S_\rightharpoonup)(\id\otimes c_{H,H})(\Delta\otimes\id)(x\otimes y)\\
&\stackrel{\eqref{eq:ydbraid}}{=}& x_1\bullet_\rightharpoonup(x_2S(x_4)\rightharpoonup y)\bullet_\rightharpoonup S_\rightharpoonup(x_3)\\
&\stackrel{\eqref{eq:mp-ydb-1}}{=}& (x_1(S(x_2)\rightharpoonup (x_3S(x_5)\rightharpoonup y)))\bullet_\rightharpoonup S_\rightharpoonup(x_4)\\
&=& (x_1(S(x_2)x_3S(x_5)\rightharpoonup y))\bullet_\rightharpoonup S_\rightharpoonup(x_4)\\
&=& (x_1(S(x_3)\rightharpoonup y))\bullet_\rightharpoonup S_\rightharpoonup(x_2)\\
&\stackrel{\eqref{eq:mp-ydb-1}}{=}& x_1(S(x_5)\rightharpoonup y_1)(S(x_2(S(x_4)\rightharpoonup y_2))\rightharpoonup S_\rightharpoonup(x_3))\\
&\stackrel{\eqref{eq:mp-ydb-2}}{=}& x_1(S(x_6)\rightharpoonup y_1)(S(S(x_5)\rightharpoonup y_2)S(x_2)\rightharpoonup (x_3\rightharpoonup S(x_4)))\\
&=& x_1(S(x_6)\rightharpoonup y_1)(S(S(x_5)\rightharpoonup y_2)S(x_2)x_3\rightharpoonup S(x_4))\\
&=& x_1(S(x_4)\rightharpoonup y_1)(S(S(x_3)\rightharpoonup y_2)\rightharpoonup S(x_2)).
\end{eqnarray*}
Next we compute the right adjoint action of $H_\rightharpoonup$ as follows.
\begin{eqnarray*}
\ad_{R,y}(x) &=& m_{\bullet_\rightharpoonup}^{(2)}(S_\rightharpoonup\otimes \id^{\otimes 2})(c_{H,H}\otimes \id)(\id\otimes\Delta)(x\otimes y)\\
&\stackrel{\eqref{eq:ydbraid}}{=}& S_\rightharpoonup(x_1S(x_3)\rightharpoonup y_1)\bullet_\rightharpoonup x_2 \bullet_\rightharpoonup y_2\\
&=& (x_1S(x_3)\rightharpoonup S_\rightharpoonup(y_1))\bullet_\rightharpoonup x_2 \bullet_\rightharpoonup y_2\\
&\stackrel{\eqref{eq:mp-ydb-1}}{=}&
(x_1S(x_4)\rightharpoonup S_\rightharpoonup(y_1))\bullet_\rightharpoonup (x_2(S(x_3) \rightharpoonup y_2))\\
&\stackrel{\eqref{eq:MP*}}{=}&
(x_1S(x_6)\rightharpoonup S_\rightharpoonup(y_1))\bullet_\rightharpoonup ((x_2S(x_5) \rightharpoonup y_2)(x_3\leftharpoonup (S(x_4)\rightharpoonup y_3)))\\
&\stackrel{\eqref{eq:mp-ydb-1}}{=}&
(x_1S(x_8)\rightharpoonup S_\rightharpoonup(y_1))\bullet_\rightharpoonup
(x_2S(x_7) \rightharpoonup y_2) \\
&&\bullet_\rightharpoonup
((x_3S(x_6) \rightharpoonup y_3)\rightharpoonup(x_4\leftharpoonup (S(x_5)\rightharpoonup y_4)))\\
&=& (x_1S(x_6)\rightharpoonup S_\rightharpoonup(y_1)\bullet_\rightharpoonup y_2)\bullet_\rightharpoonup
((x_2S(x_5) \rightharpoonup y_3)\rightharpoonup(x_3\leftharpoonup (S(x_4)\rightharpoonup y_4)))\\
&\stackrel{\eqref{eq:MP2}}{=}&\vep(x_1S(x_6))((x_2S(x_5) \rightharpoonup y_1)\rightharpoonup(x_3\leftharpoonup (S(x_4)\rightharpoonup y_2)))\\
&=& (x_1\rightharpoonup (S(x_4) \rightharpoonup y_1))\rightharpoonup(x_2\leftharpoonup (S(x_3)\rightharpoonup y_2)),
\end{eqnarray*}
where the third and the seventh equalities are due to the fact that
$S_\rightharpoonup$ and $\bullet_\rightharpoonup$ are morphisms in the category ${_H^H}\YD$.
\end{proof}

\begin{remark}
When $H_\rightharpoonup$ is braided commutative, we can apply Theorem~\ref{thm:involutive} to show that the two adjoint actions of $H_\rightharpoonup$ are trivial. Indeed,
\begin{eqnarray*}
\ad_{L,x}(y) &=& x_1(S(x_4)\rightharpoonup y_1)(S(S(x_3)\rightharpoonup y_2)\rightharpoonup S(x_2))\\
&\stackrel{\eqref{eq:in3}}{=}& x_1(S(x_3)\rightharpoonup y_1)(S(x_2)\leftharpoonup y_2)\\
&\stackrel{\eqref{eq:MP*}}{=}& x_1S(x_2)y\ =\ \vep(x)y,\\
\ad_{R,y}(x) &=& (x_1\rightharpoonup (S(x_4) \rightharpoonup y_1))\rightharpoonup(x_2\leftharpoonup (S(x_3)\rightharpoonup y_2))\\
&\stackrel{\eqref{eq:in2}}{=}& x_1\vep(S(x_2)\rightharpoonup y)\ =\ x\vep(y).
\end{eqnarray*}
\end{remark}

\begin{defn}[\cite{Ra,Ma0}]
Given a braided Hopf algebra $R$ in the category ${_H^H}\YD$ of Yetter-Drinfeld modules over a Hopf algebra $H$, there is an ordinary Hopf algebra structure on the tensor product $R\otimes H$, called the {\bf biproduct} or {\bf bosonization} of $R$ by $H$.
It is the smash product Hopf algebra $R\# H$ equipped with the multiplication
\begin{eqnarray}\label{eq:bp_mul}
(a\otimes x)(b\otimes y)&\coloneqq&a(x_1\cdot b)\otimes x_2y,\quad \forall a,b\in R,\,x,y\in H,
\end{eqnarray}
and the following coproduct
\begin{eqnarray}\label{eq:bp_co}
\Delta(a\otimes x)&\coloneqq&(a^1\otimes (a^2)_{-1}x_1)\otimes ((a^2)_0\otimes x_2),
\end{eqnarray}
where the coproduct of $R$ is written as $\Delta_R(a)=a^1\otimes a^2$.
\end{defn}

In the end of this section, we identify the double cross product $H\bowtie H$ with the bosonization of $H_\rightharpoonup$ by $H$ for a matched pair of actions $(H,\rightharpoonup,\leftharpoonup)$. Namely, $H\bowtie H$ becomes a Hopf algebra with a projection and $H_\rightharpoonup$ serves as its subalgebra of coinvariants.
\begin{theorem}\label{thm:dcp_bos}
Let $(H,\rightharpoonup,\leftharpoonup)$ be a matched pair of actions on a Hopf algebra $H$. The following linear map
$$\Phi:H\bowtie H\to H_\rightharpoonup\# H,\ x\otimes y\mapsto x_1\otimes x_2y$$
gives an isomorphism of Hopf algebras between the double cross poduct $H\bowtie H$ and the bosonization  $H_\rightharpoonup\# H$.
\end{theorem}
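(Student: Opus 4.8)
The plan is to verify directly that $\Phi$ is an isomorphism of Hopf algebras, by checking in turn that it is bijective, a morphism of coalgebras, and a morphism of algebras; compatibility with the antipodes is then automatic, since a bialgebra isomorphism between Hopf algebras necessarily commutes with the (uniquely determined) antipodes. For bijectivity I would exhibit the explicit inverse
$$\Psi:H_\rightharpoonup\# H\to H\bowtie H,\quad a\otimes z\mapsto a_1\otimes S(a_2)z,$$
and confirm $\Psi\Phi=\id$ and $\Phi\Psi=\id$ directly from the antipode axiom: for instance $\Psi\Phi(x\otimes y)=x_1\otimes S(x_2)x_3y=x\otimes y$, and symmetrically for $\Phi\Psi$. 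Preservation of the unit and counit is immediate from $\Phi(1\otimes1)=1\otimes1$ and $\vep(x_1)\vep(x_2y)=\vep(x)\vep(y)$.

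For the coalgebra property, recall that $H_\rightharpoonup$ carries the coadjoint coaction $\Ad_L$ of \eqref{eq:ydb-codaction}, which in the present matched-pair setting specializes to $\Ad_L(x)=x_1S(x_3)\otimes x_2$. Feeding $\Phi(x\otimes y)=x_1\otimes x_2y$ into the bosonization coproduct \eqref{eq:bp_co} and expanding the coaction, one meets a factor of the form $S(x_i)x_{i+1}$ inside the $H$-component; the antipode axiom collapses it to a counit, and after relabelling both $\Delta_{\#}\Phi(x\otimes y)$ and $(\Phi\otimes\Phi)\Delta_{\bowtie}(x\otimes y)$ reduce to the common value $(x_1\otimes x_2y_1)\otimes(x_3\otimes x_4y_2)$. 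Hence $\Phi$ is a coalgebra morphism.

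The multiplicativity of $\Phi$ is the crux, and the step I expect to be the main obstacle because of the bookkeeping of Sweedler indices. On the one hand, expanding $\Phi(a\otimes x)\cdot\Phi(b\otimes y)=(a_1\otimes a_2x)(b_1\otimes b_2y)$ with the bosonization product \eqref{eq:bp_mul} and the braided product \eqref{eq:mp-ydb-1}, the inner term $a_1\bullet_\rightharpoonup(a_2x_1\rightharpoonup b_1)$ simplifies once the antipode axiom turns a factor $S(a_2)a_3$ into a counit, leaving $a_1(x_1\rightharpoonup b_1)\otimes a_2x_2b_2y$. On the other hand, applying $\Phi$ to the double-cross-product multiplication $(a\otimes x)(b\otimes y)=a(x_1\rightharpoonup b_1)\otimes(x_2\leftharpoonup b_2)y$ of \eqref{eq:dcp} and expanding the coproduct of the first tensorand through the module-coalgebra property of $\rightharpoonup$, one is left with the product $(x_2\rightharpoonup b_2)(x_3\leftharpoonup b_3)$ in the $H$-component. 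This is exactly where the defining compatibility \eqref{eq:MP*} of a matched pair of actions enters: it rewrites that product as $x_2b_2$, and the two sides again agree as $a_1(x_1\rightharpoonup b_1)\otimes a_2x_2b_2y$.

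Finally I would record the structural reason behind the isomorphism, which also justifies the assertion about coinvariants. The multiplication map $\pi:H\bowtie H\to H$, $x\otimes y\mapsto xy$, is a Hopf algebra projection: it is multiplicative precisely by \eqref{eq:MP*}, it is manifestly a coalgebra map, and it splits the Hopf algebra inclusion $\iota:H\to H\bowtie H$, $y\mapsto 1\otimes y$, so that $\pi\iota=\id_H$. Thus $H\bowtie H$ is a Hopf algebra with a projection onto $H$, and by the Radford--Majid bosonization theorem it is the biproduct of its subalgebra of $\pi$-coinvariants with $H$. The map $\Phi$ is then the canonical realization of this biproduct, identifying the coinvariants with the braided Hopf algebra $H_\rightharpoonup$ in ${_H^H}\YD$; this is the conceptual content of the isomorphism $H_\rightharpoonup\# H\cong H\bowtie H$.
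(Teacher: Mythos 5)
Your proposal is correct and follows essentially the same route as the paper's proof: the same explicit inverse $x\otimes y\mapsto x_1\otimes S(x_2)y$, multiplicativity checked by cancelling $S(a_2)a_3$ inside $a_1\bullet_\rightharpoonup(a_2x_1\rightharpoonup b_1)$ on one side and invoking \eqref{eq:MP*} on the other, and comultiplicativity by collapsing the coadjoint coaction $\Ad_L$ in the bosonization coproduct, with antipode compatibility automatic for bialgebra morphisms. Your closing Radford--Majid paragraph about the projection $\pi(x\otimes y)=xy$ is a sound conceptual supplement (it is exactly the coinvariants claim stated in the abstract) but is not part of, nor a replacement for, the direct verification that both you and the paper carry out.
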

\begin{proof}
It is clear that $\Phi$ has the inverse $\Phi^{-1}:x\otimes y\mapsto x_1\otimes S(x_2)y$.
For any $x,y,z,w\in H$,
\begin{eqnarray*}
\Phi((x\otimes y)(z\otimes w))&\stackrel{\eqref{eq:dcp}}{=}& \Phi(x(y_1\rightharpoonup z_1)\otimes(y_2 \leftharpoonup z_2)w)\\
&=& x_1(y_1\rightharpoonup z_1)\otimes x_2(y_2\rightharpoonup z_2)(y_3 \leftharpoonup z_3)w\\
&\stackrel{\eqref{eq:MP*}}{=}& x_1(y_1\rightharpoonup z_1)\otimes x_2y_2z_2w.
\end{eqnarray*}
On the other hand,
\begin{eqnarray*}
\Phi(x\otimes y)\Phi(z\otimes w)&=& (x_1\otimes x_2y)(z_1\otimes z_2w)\\
&\stackrel{\eqref{eq:bp_mul}}{=}&
x_1\bullet_\rightharpoonup(x_2y_1\rightharpoonup z_1)\otimes x_3y_2z_2w\\
&\stackrel{\eqref{eq:mp-ydb-1}}{=}&
x_1(S(x_2)\rightharpoonup (x_3y_1\rightharpoonup z_1))\otimes x_4y_2z_2w\\
&=& x_1(S(x_2)x_3y_1\rightharpoonup z_1)\otimes x_4y_2z_2w\\
&=& x_1(y_1\rightharpoonup z_1)\otimes x_2y_2z_2w,
\end{eqnarray*}
so $\Phi$ is an algebra homomorphism. Meanwhile,
\begin{eqnarray*}
\Delta(\Phi(x\otimes y))&=&\Delta(x_1\otimes x_2y)\\
&\stackrel{\eqref{eq:bp_co},\,\eqref{eq:ydb-codaction}}{=}&
(x_1\otimes (x_2S(x_4))(x_5y_1))\otimes (x_3\otimes x_6y_2)\\
&=& (x_1\otimes x_2y_1)\otimes (x_3\otimes x_4y_2).
\end{eqnarray*}
On the other hand,
\begin{eqnarray*}
(\Phi\otimes \Phi)(\Delta(x\otimes y))&=&
\Phi(x_1\otimes y_1)\otimes \Phi(x_2\otimes y_2)\\
&=& (x_1\otimes x_2y_1)\otimes (x_3\otimes x_4y_2).
\end{eqnarray*}
Namely, $\Phi$ is also a coalgebra homomorphism, thus an isomorphism of Hopf algebras.
\end{proof}

\section{An illustration to classify matched pairs of actions}\label{sec:construct}

According to \cite[Theorem~5.27,\,Theorem~6.34]{GGV1}, we know that a matched pair of actions on a Hopf algebra $H$ is just a pair of module coalgebra actions $(\rightharpoonup,\leftharpoonup)$ satisfying compatibility condition \eqref{eq:MP*}, namely conditions \eqref{eq:MP1}--\eqref{eq:MP5} are redundant. In other words, we have the following simplified characterization of matched pairs of actions on Hopf algebras for the purpose of classification.
\begin{theorem}\label{thm:construct-mp}
Let $H$ be a Hopf algebra, and $\rightharpoonup$ be a left $H$-module coalgebra action on itself. Define linear map $\leftharpoonup:H\otimes H\to H$ by
\begin{eqnarray}\label{eq:r-action}
x\leftharpoonup y&=&S(x_1\rightharpoonup y_1)x_2y_2,\quad\forall x,y\in H.
\end{eqnarray}
If $\leftharpoonup$ is a right $H$-module coalgebra action,
then $(H,\rightharpoonup,\leftharpoonup)$
is a matched pair of actions on $H$.
\end{theorem}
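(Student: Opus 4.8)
The plan is to reduce the entire statement to verifying the single compatibility \eqref{eq:MP*}. Indeed, by the discussion preceding the theorem—namely \cite[Theorem~5.27,\,Theorem~6.34]{GGV1}—a pair of module coalgebra actions $(\rightharpoonup,\leftharpoonup)$ on $H$ satisfying \eqref{eq:MP*} automatically constitutes a matched pair of actions, with conditions \eqref{eq:MP1}--\eqref{eq:MP5} being redundant. Since $\rightharpoonup$ is a left $H$-module coalgebra action by assumption and $\leftharpoonup$ is a right $H$-module coalgebra action by hypothesis, the only thing left to check is that the $\leftharpoonup$ produced from $\rightharpoonup$ by formula \eqref{eq:r-action} makes \eqref{eq:MP*} hold.

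First I would expand the right-hand side of \eqref{eq:MP*} using the definition \eqref{eq:r-action}. Writing $\Delta^{(2)}(x)=x_1\otimes x_2\otimes x_3$ and likewise for $y$, substituting \eqref{eq:r-action} into $x_2\leftharpoonup y_2$ gives
$$(x_1\rightharpoonup y_1)(x_2\leftharpoonup y_2)=(x_1\rightharpoonup y_1)\,S(x_2\rightharpoonup y_2)\,x_3y_3.$$
The crucial observation is that $\rightharpoonup$ is a coalgebra homomorphism, so the two factors $(x_1\rightharpoonup y_1)$ and $S(x_2\rightharpoonup y_2)$ arise as $w_1$ and $S(w_2)$ for $w=x'\rightharpoonup y'$, where $x',y'$ are the suitably grouped Sweedler components. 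By the antipode axiom $w_1S(w_2)=\vep(w)1$ and the fact that $\rightharpoonup$ preserves the counit, these two factors telescope to a counit; the counit axiom then collapses the surviving factors via $\vep(x_1)x_2=x$ and $\vep(y_1)y_2=y$, yielding $xy$ as required. With \eqref{eq:MP*} in hand, the cited redundancy result completes the proof.

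The point to handle carefully is the Sweedler bookkeeping in the telescoping step: one must use coassociativity to regroup $\Delta^{(2)}=(\Delta\otimes\id)\Delta$ so that the two components feeding the $\rightharpoonup$-factors come from a single comultiplication while the trailing $x_3y_3$ remains separate. Two features are worth flagging. First, this verification of \eqref{eq:MP*} uses only that $\rightharpoonup$ is a coalgebra map together with the antipode and counit axioms—neither the module property of $\rightharpoonup$ nor that of $\leftharpoonup$ enters here. Second, the hypothesis that $\leftharpoonup$ be a genuine right module coalgebra action is a real assumption and not forced by \eqref{eq:r-action} alone, since $S\circ\rightharpoonup$ followed by multiplication need not be a coalgebra map once $H$ is non-cocommutative; this is precisely why the theorem must impose it rather than derive it.
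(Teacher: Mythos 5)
Your proposal is correct, and its computational core --- deriving \eqref{eq:MP*} from \eqref{eq:r-action} by regrouping Sweedler components so that $(x_1\rightharpoonup y_1)S(x_2\rightharpoonup y_2)$ telescopes through the coalgebra-map property of $\rightharpoonup$, the antipode axiom and the counit axiom --- is precisely the first step of the paper's proof, which the paper compresses into the sentence that \eqref{eq:r-action} ``equivalently implies'' \eqref{eq:MP*}. Where you genuinely diverge is afterwards: you delegate conditions \eqref{eq:MP1}--\eqref{eq:MP5} wholesale to the redundancy theorem of \cite[Theorems~5.27 and~6.34]{GGV1}, which the paper does quote immediately before the theorem, so there is no circularity and your argument is logically complete. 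The paper's own proof deliberately avoids that black box and re-derives the redundancy in this setting: it gets \eqref{eq:MP5} from the hypothesis that $\leftharpoonup$ is a coalgebra map via \cite[Lemma~3.6]{FS}; it gets \eqref{eq:MP1} by expanding the associativity $x(yz)=(xy)z$ in two ways using \eqref{eq:MP*} and cancelling factors by convolution invertibility, and \eqref{eq:MP2} similarly from $x\cdot 1$; and it checks \eqref{eq:MP3}--\eqref{eq:MP4} directly from \eqref{eq:r-action}. Your route buys brevity and a clean conceptual reduction; the paper's route buys self-containedness (hence its phrase ``for the sake of completeness'') and makes visible exactly where each hypothesis enters --- in particular the right-module property of $\leftharpoonup$, which in your argument is used only as an input to the citation. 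Your two closing observations (that the \eqref{eq:MP*} step needs only the coalgebra-map property of $\rightharpoonup$, and that the right module coalgebra hypothesis on $\leftharpoonup$ is a genuine assumption not forced by \eqref{eq:r-action}) are both accurate and consistent with how the paper applies the theorem in its classification example.
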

\begin{proof}
For the sake of completeness, we give the following sketched proof. Suppose that $\leftharpoonup$ defined by \eqref{eq:r-action} is a right $H$-module coalgebra action.
The defining formula~\eqref{eq:r-action} equivalently implies condition \eqref{eq:MP*}. Also, $\vep(x\leftharpoonup y)=\vep(x)\vep(y)$ for any $x,y\in H$, since $\rightharpoonup$ is a left $H$-module coalgebra action. According to \cite[Lemma~3.6]{FS}, $\leftharpoonup$ is a coalgebra homomorphism if and only if condition \eqref{eq:MP5} holds.

Next we show that $\rightharpoonup$ is a left ``twisted'' module algebra action satisfying conditions \eqref{eq:MP1} and \eqref{eq:MP2}.
For any $x,y,z\in H$,
\begin{eqnarray*}
xyz &=& x(yz)\\
&\stackrel{\eqref{eq:MP*}}{=}&
(x_1\rightharpoonup y_1z_1)(x_2\leftharpoonup y_2z_2)\\
&=&(x_1\rightharpoonup y_1z_1)((x_2\leftharpoonup y_2)\leftharpoonup z_2).
\end{eqnarray*}
On the other hand,
\begin{eqnarray*}
xyz &=& (xy)z\\
&\stackrel{\eqref{eq:MP*}}{=}&
(x_1\rightharpoonup y_1)(x_2\leftharpoonup y_2)z\\
&\stackrel{\eqref{eq:MP*}}{=}& (x_1\rightharpoonup y_1)
((x_2\leftharpoonup y_2)\rightharpoonup  z_1)
((x_3\leftharpoonup y_3)\leftharpoonup z_2),
\end{eqnarray*}
so we see that condition \eqref{eq:MP1} holds by cancelling factors via convolution product.

Meanwhile, we have
\begin{eqnarray*}
\vep(x_1)1x_2&=&  x1\\
&\stackrel{\eqref{eq:MP*}}{=}&(x_1\rightharpoonup 1)(x_2\leftharpoonup 1)\\
&=&(x_1\rightharpoonup 1)x_2.
\end{eqnarray*}
Hence, condition \eqref{eq:MP2} also holds by cancelling factors via convolution product.

It  remains to check that $\leftharpoonup$ is a right ``twisted'' module algebra action satisfying conditions \eqref{eq:MP3}, \eqref{eq:MP4}. Indeed, for any $x,y,z\in H$,
\begin{eqnarray*}
1\leftharpoonup x&\stackrel{\eqref{eq:r-action}}{=}& S(1\rightharpoonup x_1)x_2\ =\ S(x_1)x_2\ =\ \vep(x)1\\[.5em]
(xy)\leftharpoonup z &\stackrel{\eqref{eq:r-action}}{=}& S(x_1y_1\rightharpoonup z_1)x_2y_2z_2\\
&\stackrel{\eqref{eq:MP*}}{=}& S(x_1\rightharpoonup(y_1\rightharpoonup z_1))x_2(y_2\rightharpoonup z_2)(y_3\leftharpoonup z_3)\\
&\stackrel{\eqref{eq:r-action}}{=}&
(x\leftharpoonup(y_1\rightharpoonup z_1))(y_2\leftharpoonup z_2).
\end{eqnarray*}
In summary, $(H,\rightharpoonup,\leftharpoonup)$
is a matched pair of actions on $H$.
\end{proof}

Relating to Majid's transmutation theory, the authors in \cite{FS} pointed out that coquasitriangular structures on Hopf algebras induce matched pairs of actions, i.e. Yetter-Drinfeld braces, and applied this result to several examples, such as
Sweedler's Hopf algebra $H_4$ and its generalization $E_n\ (n\geq 3)$, and also quantum group ${\rm SL}_q(2)$.

Recall that a {\bf coquasitriangular Hopf algebra} is a Hopf algebra $H$ with a convolution-invertible bilinear map $\calr:H\otimes H\to\bk$ such that
\begin{eqnarray*}
\calr(a_1\otimes b_1)a_2b_2&=&b_1a_1\calr(a_2\otimes b_2),\\
\calr(a\otimes bc)&=&\calr(a_1\otimes c)\calr(a_2\otimes b),\\
\calr(ab\otimes c)&=&\calr(a\otimes c_1)\calr(b\otimes c_2)
\end{eqnarray*}
for any $a,b,c\in H$. In particular, if $\calr^{-1}=\calr^{\rm op}$, it is called {\bf cotriangular}~\cite[\S~2.2]{Ma}.
\begin{theorem}[\cite{FS}]\label{thm:coq-mpa}
Any coquasitriangular structure $\calr$ on a Hopf algebra $H$ naturally induces a matched pair of actions $(H,\rightharpoonup,\leftharpoonup)$ defined by
\begin{eqnarray*}
a\rightharpoonup b&=& \calr^{-1}(a_1\otimes b_1)b_2\calr(a_2\otimes b_3),\\
a\leftharpoonup b&=& \calr^{-1}(a_1\otimes b_1)a_2\calr(a_3\otimes b_2).
\end{eqnarray*}
The associated braiding operator $r$ as in Eq.~\eqref{eq:pyb} is given by
\begin{eqnarray}\label{eq:cqt-braid}
r(a\otimes b)
&=&\calr^{-1}(a_1\otimes b_1)b_2\otimes a_2\calr(a_3\otimes b_3).
\end{eqnarray}
\end{theorem}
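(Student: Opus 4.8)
The plan is to deduce the matched-pair statement from the simplified characterization in Theorem~\ref{thm:construct-mp}, which reduces the whole task to two module-coalgebra verifications, and then to read off the braiding operator directly from its defining formula \eqref{eq:pyb}.

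First I would verify that $a\rightharpoonup b=\calr^{-1}(a_1\otimes b_1)b_2\calr(a_2\otimes b_3)$ is a left $H$-module coalgebra action. That it is a counital coalgebra map is immediate, since $\calr,\calr^{-1}$ are scalars and $\Delta$ is multiplicative; the unit axiom $1\rightharpoonup b=b$ follows from $\calr(1\otimes b)=\calr^{-1}(1\otimes b)=\vep(b)$. The associativity $(ab)\rightharpoonup c=a\rightharpoonup(b\rightharpoonup c)$ is the substantive point: expanding the left-hand side by the coquasitriangular axiom $\calr(ab\otimes c)=\calr(a\otimes c_1)\calr(b\otimes c_2)$ together with the companion identities for $\calr^{-1}$ obtained by inverting the axioms, then reorganizing the resulting scalar factors, reproduces the right-hand side.

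Next I would confirm that the map $\leftharpoonup$ produced by Theorem~\ref{thm:construct-mp} via \eqref{eq:r-action} coincides with the stated formula $a\leftharpoonup b=\calr^{-1}(a_1\otimes b_1)a_2\calr(a_3\otimes b_2)$. Substituting $\rightharpoonup$ into $x\leftharpoonup y=S(x_1\rightharpoonup y_1)x_2y_2$ gives $\calr^{-1}(x_1\otimes y_1)S(y_2)\calr(x_2\otimes y_3)x_3y_4$; applying the quasi-commutativity axiom $\calr(a_1\otimes b_1)a_2b_2=b_1a_1\calr(a_2\otimes b_2)$ to the block $\calr(x_2\otimes y_3)x_3y_4$ moves $y_3$ to the left of $S(y_2)$, and the antipode relation $S(y_2)y_3=\vep(y_2)1$ collapses the expression to exactly the claimed $\leftharpoonup$. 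I would then check that this $\leftharpoonup$ is a right $H$-module coalgebra action (again a counit/coalgebra-map check plus a right-associativity computation parallel to the one for $\rightharpoonup$), whereupon Theorem~\ref{thm:construct-mp} yields that $(H,\rightharpoonup,\leftharpoonup)$ is a matched pair of actions.

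Finally I would compute the braiding operator from \eqref{eq:pyb}: inserting the two actions into $r(a\otimes b)=(a_1\rightharpoonup b_1)\otimes(a_2\leftharpoonup b_2)$ produces, after expanding, a single adjacent pair $\calr(a_2\otimes b_3)\calr^{-1}(a_3\otimes b_4)$ of scalars whose arguments are consecutive Sweedler factors; by the convolution-inverse relation $\calr(c_1\otimes d_1)\calr^{-1}(c_2\otimes d_2)=\vep(c)\vep(d)$ this pair collapses, leaving exactly \eqref{eq:cqt-braid}. The main obstacle I anticipate is the two module-associativity verifications: they require invoking all three coquasitriangular axioms together with the inverted identities for $\calr^{-1}$ in the correct order, plus careful Sweedler bookkeeping to match the many $\calr$- and $\calr^{-1}$-factors; everything else is a short collapse using the counit, antipode, or convolution-inverse relations.
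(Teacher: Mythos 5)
Your proposal is correct. Note, however, that the paper itself offers no proof of Theorem~\ref{thm:coq-mpa}: it is imported from \cite{FS}, where it arises from Majid's transmutation theory --- the transmuted braided Hopf algebra of a coquasitriangular Hopf algebra yields a Yetter-Drinfeld brace, hence (by the equivalence) a matched pair of actions. Your route is genuinely different and self-contained: you verify directly that $a\rightharpoonup b=\calr^{-1}(a_1\otimes b_1)b_2\calr(a_2\otimes b_3)$ is a left module coalgebra action, show that the recipe \eqref{eq:r-action} reproduces the stated $\leftharpoonup$ (via quasi-commutativity and the antipode), check the right module coalgebra axioms, and then invoke the paper's own Theorem~\ref{thm:construct-mp}; the braiding formula \eqref{eq:cqt-braid} then falls out of \eqref{eq:pyb} by one convolution-inverse collapse. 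This is arguably the proof the paper's structure invites, since Theorem~\ref{thm:construct-mp} is stated immediately beforehand precisely as a verification-reducing tool, and it avoids any appeal to transmutation; what the \cite{FS} route buys instead is the conceptual identification of these matched pairs with transmuted objects, which the paper uses to recognize the first family in Example~\ref{ex:A22} as cotriangular. Two small inaccuracies in your write-up, neither fatal: the coalgebra-map property of $\rightharpoonup$ is not ``immediate'' --- checking $\Delta(a\rightharpoonup b)=(a_1\rightharpoonup b_1)\otimes(a_2\rightharpoonup b_2)$ requires the same collapse $\calr(a_2\otimes b_3)\calr^{-1}(a_3\otimes b_4)=\vep(a_2)\vep(b_3)$ that you use later; and in the $\leftharpoonup$ derivation, quasi-commutativity moves the relevant Sweedler leg of $y$ to the \emph{right} of $S(y_2)$, producing the adjacent pair $S(y_2)y_3$ that the antipode kills, giving $\calr^{-1}(x_1\otimes y_1)\,x_2\,\calr(x_3\otimes y_2)$ as claimed.
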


Here as an illustration, we use the characterization in Theorem~\ref{thm:construct-mp} to classify matched pairs of actions on the $8$-dimensional non-semisimple Hopf algebra $A_{C_2\times C_2}$ intensively studied in \cite{St}. After that, we analyze their associated Yang-Baxter operators.

\begin{exam}\label{ex:A22}
The Hopf algebra $A_{C_2\times C_2}$ is generated by 3 elements $g$, $h$, $x$, with the relations
$$g^{2} =h^{2}= 1, \quad gh = hg,\quad x^{2} =0,\quad gx=-xg,\quad hx=-xh,$$
A linear basis for $A_{C_2\times C_2}$ is given by $\{g^ih^jx^k\,|\,i,j,k=0,1\}$. The coalgebra structure and the antipode of $A_{C_2\times C_2}$ are defined by
$$
\Delta(g) = g \otimes g, \quad \Delta(h) = h \otimes h,
\quad \Delta(x) =x\otimes 1+g\otimes x,$$
$$
\varepsilon(g) = \varepsilon(h) = 1, \quad\varepsilon(x) = 0, \quad S(g) = g, \quad S(h) = h, \quad S(x) = xg,
$$
so it contains $H_4$ as a Hopf subalgebra. In particular, the group of group-like elements in $A_{C_2\times C_2}$
$$G(A_{C_2\times C_2})=\{1,g,h,gh\},$$
isomorphic to the Klein 4-group, and all the nonzero subspaces of skew primitive elements in $A_{C_2\times C_2}$ are as follows.
\begin{align*}
&P_{1,h}(A_{C_2\times C_2})=P_{h,1}(A_{C_2\times C_2})=\bk(1-h),\\
&P_{1,gh}(A_{C_2\times C_2})=P_{gh,1}(A_{C_2\times C_2})=\bk(1-gh),\\
&P_{g,h}(A_{C_2\times C_2})=P_{h,g}(A_{C_2\times C_2})=\bk(g-h),\\
&P_{g,gh}(A_{C_2\times C_2})=P_{gh,g}(A_{C_2\times C_2})=\bk(g-gh),\\
&P_{1,g}(A_{C_2\times C_2})=\bk(1-g)\oplus \bk x,\\
&P_{g,1}(A_{C_2\times C_2})=\bk(1-g)\oplus \bk gx,\\
&P_{h,gh}(A_{C_2\times C_2})=\bk(h-gh)\oplus \bk hx,\\
&P_{gh,h}(A_{C_2\times C_2})=\bk(h-gh)\oplus \bk ghx.
\end{align*}

Suppose that there is a left module coalgebra action $\rightharpoonup$ on $A_{C_2\times C_2}$,
and the linear map $\leftharpoonup$ defined by Eq.~\eqref{eq:r-action} is a right module coalgebra action on $A_{C_2\times C_2}$. Then
\begin{align*}
&g\leftharpoonup g = g\rightharpoonup g,\\
&g\leftharpoonup h = (g\rightharpoonup h)gh,\\
&h\leftharpoonup g = (h\rightharpoonup g)gh,\\
&h\leftharpoonup h = h\rightharpoonup h
\end{align*}
are all non-trivial group-like elements by Eqs.~\eqref{eq:MP2}, \eqref{eq:MP4}. Otherwise,
let $g\leftharpoonup g=1$ for instance, then $g=g\leftharpoonup g^2=(g\leftharpoonup g)\leftharpoonup g=1\leftharpoonup g=1$, which is a contradiction.

First note that
$g\rightharpoonup x\in P_{1,g\rightharpoonup g}(A_{C_2\times C_2})\setminus\bk G(A_{C_2\times C_2})$, as $g\rightharpoonup (g\rightharpoonup x)=x$.
According to the situation of skew primitive elements of $A_{C_2\times C_2}$, we have $g\rightharpoonup g=g\leftharpoonup g = g$. Then it implies that $x\rightharpoonup g$, $g\leftharpoonup x\in P_{g,g}(A_{C_2\times C_2})=\{0\}$, so $x\rightharpoonup g=g\leftharpoonup x=0$.
Meanwhile, for $g\rightharpoonup x\in P_{1,g}(A_{C_2\times C_2})=\bk(1-g)\oplus \bk x$, we know that it is a scalar multiple of $x$, and then
$$g\leftharpoonup x \stackrel{\eqref{eq:r-action}}{=} S(g\rightharpoonup x)g+S(g\rightharpoonup g)gx
=(g\rightharpoonup x)g^2+g^2x
=g\rightharpoonup x+x=0.$$
So $g\rightharpoonup x=-x$. On the other hand, $x\leftharpoonup g \stackrel{\eqref{eq:r-action}}{=} S(x\rightharpoonup g)g+S(g\rightharpoonup g)xg=gxg=-x$.

By a similar argument, we consider
$h\rightharpoonup x\in P_{1,h\rightharpoonup g}(A_{C_2\times C_2})$ to obtain that $h\rightharpoonup g=g$, and then $h\leftharpoonup g=h$. It implies that  $h\leftharpoonup x\in P_{h,h}(A_{C_2\times C_2})=0$, so $h\leftharpoonup x=0$.
For $h\rightharpoonup x\in P_{1,g}(A_{C_2\times C_2})=\bk(1-g)\oplus \bk x$, it is also a scalar multiple of $x$, and then
$$h\leftharpoonup x \stackrel{\eqref{eq:r-action}}{=} S(h\rightharpoonup x)h+S(h\rightharpoonup g)hx
=(h\rightharpoonup x)gh+ghx=0,$$
so $h\rightharpoonup x=-x$. By symmetry, we have $g\rightharpoonup h=h$, $g\leftharpoonup h=g$, $x\rightharpoonup h=0$ and $x\leftharpoonup h=-x$.

Now since
\begin{align*}
\Delta(x\rightharpoonup x)
&=(x\rightharpoonup x)\otimes (1\rightharpoonup 1)+ (g\rightharpoonup x)\otimes (x\rightharpoonup 1)\\
&\quad +(x\rightharpoonup g)\otimes (1\rightharpoonup x) + (g\rightharpoonup g)\otimes (x\rightharpoonup x)\\
&=(x\rightharpoonup x)\otimes 1+ g\otimes (x\rightharpoonup x),
\end{align*}
we know that $x\rightharpoonup x\in P_{1,g}(A_{C_2\times C_2})=\bk(1-g)\oplus \bk x$.
For $x\rightharpoonup(x\rightharpoonup x)=x^2\rightharpoonup x=0$, we should have $x\rightharpoonup x=\alpha(1-g)$ for some $\alpha\in \bk$.  Then
$$x\leftharpoonup x \stackrel{\eqref{eq:r-action}}{=} S(x\rightharpoonup x)+S(g\rightharpoonup x)x + S(x\rightharpoonup g)x+S(g\rightharpoonup g)x^2
=\alpha(1-g).$$

Consequently, we have confirmed that
$$\left\{\begin{array}{l}
g\rightharpoonup g = g\leftharpoonup g = g,\\
g\rightharpoonup h = h\leftharpoonup g = h,\\
h\rightharpoonup g = g\leftharpoonup h = g,\\
g\rightharpoonup x = h\rightharpoonup x = x\leftharpoonup g = x\leftharpoonup h =-x,\\
x\rightharpoonup g = x\rightharpoonup h =g\leftharpoonup x = h\leftharpoonup x =0,\\
x\rightharpoonup x = x\leftharpoonup x = \alpha(1-g),
\end{array}\right.$$
and then as $h\rightharpoonup h \neq h\rightharpoonup g=g$, we have the following two situations to consider:
\begin{enumerate}[(i)]
\item
$h\rightharpoonup h = h\leftharpoonup h = h$,
\item
$h\rightharpoonup h = h\leftharpoonup h = gh$.
\end{enumerate}

Correspondingly, for all other interactions involving basis elements $gh,gx,hx,ghx$, one can calculate them by Eqs.~\eqref{eq:MP1}--\eqref{eq:MP4}. For example,
\begin{eqnarray*}
gx\rightharpoonup gx &=& g\rightharpoonup (x\rightharpoonup gx)\\
&\stackrel{\eqref{eq:MP1}}{=}& g\rightharpoonup ((x\rightharpoonup g)((1\leftharpoonup g)\rightharpoonup x)+(g\rightharpoonup g)((x\leftharpoonup g)\rightharpoonup x))\\
&=& g\rightharpoonup (-g(x \rightharpoonup x))\\
&=& -\alpha(g\rightharpoonup g(1-g))\\
&=& \alpha(1-g).
\end{eqnarray*}

After checking that both $\rightharpoonup$, $\leftharpoonup$ derived are really module coalgebra actions, we finally obtain by Theorem~\ref{thm:construct-mp} the following two families of matched pairs of actions on $A_{C_2\times C_2}$ parameterized by $\alpha\in \bk$, both of which are extended from the unique one on $H_4$.
Since the right action $\leftharpoonup$ can be obtained by Eq.~\eqref{eq:r-action} through the left one, we only present the multiplication tables of the left action $\rightharpoonup$ for simplicity.

\begin{table}[h]
\begin{center}
{\small \caption{ The 1st family of matched pairs of actions on $A_{C_2\times C_2}$}\label{tb:1}
\renewcommand{\arraystretch}{1.1}
{\begin{tabular}{|c|c|c|c|c|c|c|c|c|}
\hline
$\rightharpoonup$ & $1$ & $g$ & $h$ & $gh$ & $x$ & $gx$ & $hx$ &$ghx$\\
\hline
$1$ & $1$ & $g$ & $h$ & $gh$ & $x$ & $gx$ & $hx$ &$ghx$ \\
\hline
$g$ & $1$ & $g$ & $h$ & $gh$ & $-x$ & $-gx$ & $-hx$ &$-ghx$\\
\hline
$h$ & $1$ & $g$ & $h$ & $gh$ & $-x$ & $-gx$ & $-hx$ &$-ghx$\\
\hline
$gh$ & $1$ & $g$ & $h$ & $gh$ & $x$ & $gx$ & $hx$ &$ghx$ \\
\hline
$x$ & $0$ & $0$ & $0$ & $0$ & $\alpha(1-g)$ & $\alpha(1-g)$ & $\alpha(gh-h)$ &$\alpha(gh-h)$\\
\hline
$gx$ & $0$ & $0$ & $0$ & $0$ & $\alpha(1-g)$ & $\alpha(1-g)$ & $\alpha(gh-h)$ &$\alpha(gh-h)$\\
\hline
$hx$ & $0$ & $0$ & $0$ & $0$ & $\alpha(1-g)$ & $\alpha(1-g)$ & $\alpha(gh-h)$ &$\alpha(gh-h)$\\
\hline
$ghx$ & $0$ & $0$ & $0$ & $0$ & $\alpha(1-g)$ & $\alpha(1-g)$ & $\alpha(gh-h)$ &$\alpha(gh-h)$\\
\hline
\end{tabular}}

\vspace{1.5em}
\caption{ The 2nd family of matched pairs of actions on $A_{C_2\times C_2}$}\label{tb:2}
\renewcommand{\arraystretch}{1.1}
{\begin{tabular}{|c|c|c|c|c|c|c|c|c|}
\hline
$\rightharpoonup$ & $1$ & $g$ & $h$ & $gh$ & $x$ & $gx$ & $hx$ &$ghx$\\
\hline
$1$ & $1$ & $g$ & $h$ & $gh$ & $x$ & $gx$ & $hx$ &$ghx$ \\
\hline
$g$ & $1$ & $g$ & $h$ & $gh$ & $-x$ & $-gx$ & $-hx$ &$-ghx$\\
\hline
$h$ & $1$ & $g$ & $gh$ & $h$ & $-x$ & $-gx$ & $ghx$ &$hx$\\
\hline
$gh$ & $1$ & $g$ & $gh$ & $h$ & $x$ & $gx$ & $-ghx$ &$-hx$ \\
\hline
$x$ & $0$ & $0$ & $0$ & $0$ & $\alpha(1-g)$ & $\alpha(1-g)$ & $\alpha(gh-h)$ &$\alpha(gh-h)$\\
\hline
$gx$ & $0$ & $0$ & $0$ & $0$ & $\alpha(1-g)$ & $\alpha(1-g)$ & $\alpha(gh-h)$ &$\alpha(gh-h)$\\
\hline
$hx$ & $0$ & $0$ & $0$ & $0$ & $\alpha(1-g)$ & $\alpha(1-g)$ & $\alpha(h-gh)$ &$\alpha(h-gh)$\\
\hline
$ghx$ & $0$ & $0$ & $0$ & $0$ & $\alpha(1-g)$ & $\alpha(1-g)$ & $\alpha(h-gh)$ &$\alpha(h-gh)$\\
\hline
\end{tabular}}}
\end{center}
\end{table}
\end{exam}

\begin{theorem}\label{thm:A22}
All the Yang-Baxter operators associated to matched pairs of actions on the Hopf algebra $A_{C_2\times C_2}$ are involutive.
\end{theorem}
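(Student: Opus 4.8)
The plan is to invoke Theorem~\ref{thm:involutive} and, for each of the two families of matched pairs of actions classified in Example~\ref{ex:A22}, verify one of its equivalent involutivity conditions. The most economical choice is condition~\ref{in3}, namely $x\leftharpoonup y=S(x_1\rightharpoonup y)\rightharpoonup x_2$ for all $x,y\in A_{C_2\times C_2}$; equivalently one may test condition~\ref{in2}, i.e. $r^2=\id^{\otimes2}$ through Eq.~\eqref{eq:in2}, or condition~\ref{in4}, the braided commutativity of $H_\rightharpoonup$ in ${_H^H}\YD$. Since $\leftharpoonup$ was reconstructed from $\rightharpoonup$ via Eq.~\eqref{eq:r-action} and $r$ is defined by Eq.~\eqref{eq:pyb}, every quantity in these identities is explicitly tabulated, so the proof becomes a finite verification that one only needs to organize so as to keep the bookkeeping under control.

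First I would cut down the number of cases. By bilinearity it suffices to test the chosen identity on the basis $\{g^ih^jx^k\}$, and the coproducts are simple: group-like elements are $\Delta$-diagonal, while $x$ and its group-like translates are skew-primitive, so each Sweedler sum collapses to two terms. Moreover $A_{C_2\times C_2}$ carries a $\ZZ/2$-grading with $g,h$ even and $x$ odd, and the tables show that $\rightharpoonup$ and $\leftharpoonup$ are homogeneous of degree $0$ for it (even on even is group-like, even on odd is odd, odd on even is $0$, and odd on odd lands in $\bk G(A_{C_2\times C_2})$). Because $\rightharpoonup$ is a left and $\leftharpoonup$ a right module action, the validity of the involutivity identity propagates across products in either slot via Eqs.~\eqref{eq:MP1}--\eqref{eq:MP4}; this reduces the verification to the generators $g,h,x$ together with the representative product $x\rightharpoonup x$.

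The even block is handled structurally. Restricting $r$ to $\bk G(A_{C_2\times C_2})$ recovers the set-theoretic braided group on the Klein four-group $G=\{1,g,h,gh\}$ coming from the group-like part of the tables. For the first family this action is trivial, so $r$ restricts to the flip $a\otimes b\mapsto b\otimes a$, which is manifestly involutive; for the second family the action is nontrivial only through $h\rightharpoonup h=gh$ and $gh\rightharpoonup gh=h$, and a direct check of Eq.~\eqref{eq:in2} on these four group-likes (equivalently, that the induced matched pair of the Klein group is the involutive solution) closes this block.

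The main obstacle, and the only place with genuine content, is the odd sector: the cases in which one or both arguments are $x$-type elements, where $x\rightharpoonup x=x\leftharpoonup x=\alpha(1-g)$ introduces the parameter $\alpha$. Here one must check that all $\alpha$-dependent contributions cancel, so that involutivity holds uniformly in $\alpha$, while tracking the sign rule $gx=-xg$ and the antipode values $S(x)=xg$ (whence $S^2(x)=-x$). For instance, expanding $r(x\otimes x)$ with $\Delta(x)=x\otimes1+g\otimes x$ and the tabulated actions gives a three-term expression $\alpha(1-g)\otimes1-x\otimes x+g\otimes\alpha(1-g)$, whose image under a second application of $r$ collapses, after the $\alpha$-terms cancel in pairs, back to $x\otimes x$; the remaining mixed cases such as $x\otimes g$, $g\otimes x$ and $x\otimes h$ are analogous and shorter. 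Assembling the even and odd blocks, condition~\ref{in3} holds for every $\alpha\in\bk$ and for both families, so by Theorem~\ref{thm:involutive} all the associated Yang-Baxter operators are involutive.
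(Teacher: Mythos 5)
Your overall strategy is sound, and the computations you exhibit are correct (e.g.\ $r(x\otimes x)=\alpha(1-g)\otimes 1-x\otimes x+g\otimes\alpha(1-g)$, which a second application of $r$ does collapse back to $x\otimes x$ after the $\alpha$-terms cancel). But your route is genuinely different from the paper's. The paper treats the two families asymmetrically: for the first family (TABLE~\ref{tb:1}) it does no involutivity computation at all, instead exhibiting a cotriangular structure $\calr_\alpha$ that induces this matched pair via Theorem~\ref{thm:coq-mpa}; since $\calr_\alpha^{-1}=\calr_\alpha^{\rm op}$, the associated operator \eqref{eq:cqt-braid} is automatically involutive. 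Only for the second family (TABLE~\ref{tb:2}), which provably cannot come from a coquasitriangular structure (because $h\rightharpoonup h=gh\neq h$), does the paper verify condition \eqref{eq:in3}, and it handles all basis pairs at once through closed-form exponent formulas such as $g^ih^jx\rightharpoonup g^kh^lx=(-1)^{(j+1)l}\alpha(1-g)h^l$. Your uniform, purely computational treatment of both families is more elementary and independent of the transmutation/cotriangularity input, at the price of more case analysis; the paper's argument for family~1 buys a conceptual explanation of \emph{why} it is involutive.

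The one step you must tighten is the claim that the involutivity identity ``propagates across products in either slot via Eqs.~\eqref{eq:MP1}--\eqref{eq:MP4},'' reducing everything to the generators $g,h,x$. This is not immediate as stated. For condition \eqref{eq:in2} it does follow from the braiding-operator axioms \ref{eq:pb-b} and \ref{eq:pb-c} of Definition~\ref{def:braiding}: one gets $r^2(m\otimes\id)=(m\otimes\id)(\id\otimes r)(r^2\otimes\id)(\id\otimes r)$, so proving $r^2=\id$ on $uv\otimes w$ requires the inductive hypothesis not only on $v\otimes w$ but also on $u\otimes(v_1\rightharpoonup w_1)$ --- pairs whose second entry is an \emph{action image} (e.g.\ $\alpha(1-g)$), not a generator. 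Hence the induction must be organized (group-like second arguments first, then the odd sector), and this scaffolding is exactly what you elide. Worse, you propose to verify condition \eqref{eq:in3} as the primary target, and for \eqref{eq:in3} no such multiplicativity is available off the shelf; it would itself need proof. Either formulate and prove the propagation lemma for \eqref{eq:in2}, or drop the reduction entirely and check the finitely many basis pairs directly --- the paper's closed-form formulas for $\rightharpoonup$ on $g^ih^jx^k$ are precisely the device that makes this exhaustive check a two-line exponent calculation rather than a $64$-case table.
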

\begin{proof}
The first family of matched pairs of actions on $A_{C_2\times C_2}$ given in TABLE~\ref{tb:1} can be induced by a class of cotriangular structures $\calr_\alpha$ with parameter $\alpha\in \bk$ (see Theorem~\ref{thm:coq-mpa}), where
$$\calr_\alpha(g^ih^jx^k\otimes g^lh^mx^n)=
\delta_{k,0}\delta_{n,0}(-1)^{(i+j)(l+m)}+\delta_{k,1}\delta_{n,1}\alpha(-1)^{(i+j)(l+m+1)},\quad i,j,k,l,m,n=0,1.$$
Since $\calr_\alpha^{-1}=\calr_\alpha^{\rm op}$, this family of matched pairs of actions produce involutive Yang-Baxter operators defined by \eqref{eq:cqt-braid}.

On the other hand, the second family of matched pairs of actions on $A_{C_2\times C_2}$ given in TABLE~\ref{tb:2} can not be derived from any coquasitriangular structures, as some interactions between group-like elements are non-trivial, e.g. $h\rightharpoonup h=h\leftharpoonup h=gh\neq h$.
In spite of this, by checking that Eq.~\eqref{eq:in3} hold for this family of matched pairs of actions, we see that their associated Yang-Baxter operators are also involutive according to Theorem~\ref{thm:involutive}.

Indeed, based on the data in TABLE~\ref{tb:2}, we have
\begin{eqnarray*}
g^ih^j \rightharpoonup g^kh^l &=& g^{k+jl}h^l,\\
g^ih^jx \rightharpoonup g^kh^l &=& 0,\\
g^ih^j \rightharpoonup g^kh^lx &=& (-1)^{i+j(l+1)}g^{k+jl}h^lx,\\
g^ih^jx \rightharpoonup g^kh^lx &=& (-1)^{(j+1)l}\alpha(1-g)h^l
\end{eqnarray*}
for any $i,j,k,l=0,1$. Take for example the LHS of Eq.~\eqref{eq:in3} to be
\begin{eqnarray*}
g^ih^jx \leftharpoonup g^kh^lx &\stackrel{\eqref{eq:r-action}}{=}&
S(g^ih^jx \rightharpoonup g^kh^lx)g^ih^jg^kh^l+
S(g^{i+1}h^j \rightharpoonup g^kh^lx)g^ih^jxg^kh^l\\
&&\quad+S(g^ih^jx \rightharpoonup g^{k+1}h^l)g^ih^jg^kh^lx
+S(g^{i+1}h^j \rightharpoonup g^{k+1}h^l)g^ih^jxg^kh^lx\\
&=& (-1)^{(j+1)l}\alpha(1-g)h^lg^ih^jg^kh^l
+(-1)^{i+j(l+1)}x g^{k+jl+1}h^l g^ih^jxg^kh^l\\
&=& (-1)^{i+k+(j+1)l}\alpha(1-g)h^j.
\end{eqnarray*}
Correspondingly, the RHS of Eq.~\eqref{eq:in3} equals to
\begin{eqnarray*}
&&S(g^ih^jx \rightharpoonup g^kh^lx)\rightharpoonup g^ih^j+
S(g^{i+1}h^j \rightharpoonup g^kh^lx)\rightharpoonup g^ih^jx\\
&=& (-1)^{(j+1)l}\alpha(1-g)h^l\rightharpoonup g^ih^j
+(-1)^{i+1+j(l+1)}x g^{k+jl+1}h^l\rightharpoonup g^ih^jx\\
&=& (-1)^{i+j+k+l} g^{k+jl+1}h^lx \rightharpoonup g^ih^jx\\
&=& (-1)^{i+k+(j+1)l}\alpha(1-g)h^j.
\end{eqnarray*}
So they coincide with each other.
\end{proof}

\begin{remark}
In a sequel paper~\cite{LX}, we further classified matched pairs of actions on the Kac-Paljutkin Hopf algebra $H_8$. It is the unique 8-dimensional non-commutative and non-cocommutative semisimple Hopf algebra. In comparison with $A_{C_2\times C_2}$, there are totally six matched pairs of actions on $H_8$, but only two of them provide involutive Yang-Baxter operators.
\end{remark}

\vspace{0.1cm}
 \noindent
{\bf Acknowledgements} \,This work is supported by National Natural Science Foundation of China (12071094, 12171155), and Basic and Applied Basic Research Foundation of Guangdong Province (2022A1515010357).

\bibliographystyle{amsplain}

\begin{thebibliography}{99}


\bibitem{AGV} I. Angiono, C. Galindo and L. Vendramin, Hopf braces and Yang-Baxter operators, \textit{Proc. Amer. Math. Soc.} {\bf 145} (2017), 1981--1995.

\bibitem{Ba} J. C. Baez, R-commutative geometry and quantization of Poisson algebras, \emph{Adv. Math.} {\bf 95 }(1992), 61--91.

\bibitem{BGST} C. Bai, L. Guo, Y. Sheng and R. Tang, Post-groups, (Lie-)Butcher groups and the Yang-Baxter equation. \emph{Math. Ann.} {\bf 388} (2024), 3127--3167.

\bibitem{Dr} V. G. Drinfeld, On some unsolved problems in quantum group theory, In Quantum groups ({L}eningrad, 1990), {\it Lecture Notes in Math.} {\bf 1510} (1992), 1--8.

\bibitem{ELM} K. Ebrahimi-Fard, A. Lundervold and H. Munthe-Kaas, On the Lie enveloping algebra of a post-Lie algebra, \textit{J. Lie Theory} {\bf 25} (2015), 1139--1165.

\bibitem{Etingof} P. Etingof, T. Schedler and A. Soloviev,   Set-theoretical solutions to the quantum Yang-Baxter equation, \textit{Duke Math. J.} {\bf 100} (1999),  169--209.


\bibitem{FS}
D. Ferri and A. Sciandra, Matched pairs and Yetter-Drinfeld braces, arXiv:2406.10009.


\bibitem{GVan} T. Gateva-Ivanova and M. Van den Bergh, Semigroups of I-type, \textit{J. Algebra} {\bf 206} (1998),  97--112.


\bibitem{GV} L. Guarnieri and L. Vendramin, Skew braces and the Yang-Baxter equation, {\it Math. Comput.} {\bf 86} (2017), 2519--2534.


\bibitem{GGV}
J. A. Guccione, J. J. Guccione and L. Vendramin, Yang-Baxter operators in symmetric categories, \textit{Comm. Algebra} {\bf46} (2018), 2811--2845.

\bibitem{GGV0}
J. A. Guccione, J. J. Guccione and C. Valqui, Set-theoretic type solutions of the braid equation, {\it J. Algebra} {\bf 644} (2024), 461--525.

\bibitem{GGV1}
J. A. Guccione, J. J. Guccione and C. Valqui, Set-theoretic type solutions of the braid equation, arXiv:2008.13494 (v5).


\bibitem{LST} Y. Li, Y. Sheng and R. Tang, Post-Hopf algebras, relative Rota-Baxter operators and solutions to the Yang-Baxter equation, \textit{J. Noncommut. Geom.} {\bf 18} (2024), 605--630.


\bibitem{LYZ} J. Lu, M. Yan and Y. Zhu,   On the set-theoretical Yang-Baxter equation, \textit{Duke Math. J.} {\bf 104} (2000),   1--18.

\bibitem{Ma0} S. Majid, Crossed products by braided groups and bosonization, \textit{J. Algebra} {\bf 163} (1994), 165--190.

\bibitem{Ma1} S. Majid, Physics for algebraists: Non-commutative and non-cocommutative Hopf algebras by a bicrossproduct construction, \textit{J. Algebra} {\bf 130} (1990), 17--64.

\bibitem{Ma} S. Majid, Foundations of quantum group theory, Cambridge University Press, 1995.


\bibitem{Man} Yu. I. Manin, Quantum groups and non-commutative geometry, in: Les publications CRM, Universit\'e de Montreal, 1988.


\bibitem{Mon} S. Montgomery, Hopf algebras and Their Actions on Rings, Amer. Math. Soc., Regional Conf. Ser. in Math., \textbf{82}, 1993.

\bibitem{MW} H. Z. Munthe-Kaas and W. M. Wright, On the Hopf algebraic structure of Lie group integrators, \emph{Found.  Comput. Math.} {\bf 8} (2008), 227--257.

\bibitem{Ra} D. Radford, The Structure of Hopf algebras with a projection, \textit{J. Algebra} {\bf 92} (1985), 322--347.

\bibitem{Ru} W. Rump, Braces, radical rings, and the quantum Yang-Baxter equation, \textit{J. Algebra} \textbf{307} (2007),  153--170.

\bibitem{Sc}
A. Sciandra, Yetter-Drinfeld post-Hopf algebras and Yetter-Drinfeld relative Rota-Baxter operators, arXiv:2407.17922.


\bibitem{St} D. Stefan, Hopf algebras of low dimension, \textit{J. Algebra} {\bf 211} (1999), 343--361.

\bibitem{Ta1} M. Takeuchi, Matched pairs of groups and bismash products of Hopf algebras, \textit{Comm. Algebra}  {\bf 9} (1981), 841--882.

\bibitem{Ta} M. Takeuchi, Survey on matched pairs of groups-an elementary approach to the ESS-LYZ theory, \textit{Banach Center Publications} {\bf 61} (2003), 305--331.

\bibitem{LX} Y. Xiao and Y. Li, Matched pairs of actions on the Kac-Paljutkin Hopf algebra $H_8$, arXiv:2501.13747.

\end{thebibliography}

\end{document}